\newtheorem{thm}{Theorem}[section]
\newtheorem{lem}[thm]{Lemma}
\newtheorem{prop}[thm]{Proposition}
\newtheorem{fact}[thm]{Fact}
\newtheorem{defn}[thm]{Definition}
\theoremstyle{definition}
\newcommand{\rr}{\mathbb{R}}
\newcommand{\nn}{\mathbb{N}}
\newcommand{\ee}{\varepsilon}
\newcommand{\meg}{\geqslant}
\newcommand{\mik}{\leqslant}
\newcommand{\ave}{\mathbb{E}}
\newcommand{\cala}{\mathcal{A}}
\newcommand{\calf}{\mathcal{F}}
\newcommand{\cals}{\mathcal{S}}
\newcommand{\calp}{\mathcal{P}}
\newcommand{\calq}{\mathcal{Q}}
\newcommand{\bmu}{\boldsymbol{\mu}}
\begin{document}

\title{An algorithmic regularity lemma for $L_p$ regular sparse matrices}

\author{Thodoris Karageorgos and Silouanos Brazitikos}

\address{Department of Mathematics, University of Athens, Panepistimiopolis 157 84, Athens, Greece}
\email{silouanb@math.uoa.gr}
\address{Department of Mathematics, University of Athens, Panepistimiopolis 157 84, Athens, Greece}
\email{tkarageo@math.uoa.gr}

\thanks{2010 \textit{Mathematics Subject Classification}: 05C35, 46B25, 60G42, 68R05.}
\thanks{\textit{Key words}: sparse graphs, sparse matrices, regularity lemma, algorithmic regularity lemma.}


\begin{abstract}
We prove an algorithmic regularity lemma for $L_p$ regular matrices $(1 < p \mik \infty),$ a class of sparse $\{0,1\}$ matrices
which obey a natural pseudorandomness condition. This extends a result of Coja-Oghlan, Cooper and Frieze who treated the case
of $L_{\infty}$ regular matrices. We also present applications of this result for tensors and  MAX-CSP instances.
\end{abstract}

\maketitle


\section{Introduction}

\numberwithin{equation}{section}

\subsection{Overview}

It is well known that it is NP-hard not only to compute the optimal solution for the  MAX-CSP problem,
but also to find ``good'' approximations of this optimal solution (see, e.g., \cite{Ha,KKMO,TSSW}).

In a seminal paper  \cite{FK}, Frieze and  Kannan proved several results concerning  \emph{dense} instances of the previous problems.
Later on, Coja-Oghlan,  Cooper and  Frieze \cite{CCF} showed that such results may be extended to the \emph{sparse} setting if we assume
a pseudorandomness condition known as \emph{$(C,\eta)$-boundedness} (see \cite{Koh,Koh1}). Specifically, in \cite{CCF} the authors found
an algorithm for approximating a sparse $\{0,1\}$ matrix $f$ by a sum of \emph{cut matrices} under the assumption that $f$ is $(C,\eta)$-bounded.
The crucial fact is that the number of summands is independent of the size of the matrix and its density. Then, using this result, they proved
a similar theorem for tensors which in turn yields approximations for sparse MAX-CSP instances.

The purpose of this paper is to extend these results to a larger class of  sparse $\{0,1\}$ matrices, namely, the $L_p$ regular matrices
introduced recently by Borgs, Chayes, Cohn and Zhao \cite{BCCZ}.

\subsubsection{ \ }

To proceed with our discussion it is useful at this point to introduce some pieces of notation and some terminology.
Unless otherwise stated, in the rest of this paper by $n_1$ and $n_2$ we denote two positive integers. As usual,
for every positive integer $n$ we set $[n]\coloneqq \{1,\dots,n\}$. The cardinality of a finite set $S$ is denoted by~$|S|$.

If $X$ is a nonempty finite set, then by $\mu_X$ we denote the uniform probability measure on $X$, that is, $\mu_X(A)\coloneqq |A|/|X|$
for every $A\subseteq X$. For notational simplicity, the probability measures $\mu_{[n_1]}, \mu_{[n_2]}$ and $\mu_{[n_1]\times [n_2]}$
will be denoted by $\mu_1, \mu_2$ and $\bmu$  respectively. If $\calp$ is a partition of $[n_1]\times [n_2],$ then by $\cala_\calp$
we denote the (finite) $\sigma$-algebra on $[n_1]\times [n_2]$ generated by $\calp.$

Next, let $X_1,X_2$ be nonempty finite sets and set
\[\cals_{X_1 \times X_2}\coloneqq \{A_1 \times A_2 \colon A_1 \subseteq X_1 \ \text{and} \ A_2 \subseteq X_2\}.\]
If $X_1$ and $X_2$ are understood from the context (in particular, if $X_1=[n_1]$ and $X_2=[n_2]$), then we shall denote $\cals_{X_1 \times X_2}$
simply by $\cals.$ Moreover, for every partition $\calp$ of $X_1 \times X_2$ with $\calp \subseteq \cals_{X_1 \times X_2}$ we set
\[\iota(\calp)\coloneqq \min\big\{ \min\{\mu_{X_1}(P_1), \mu_{X_2}(P_2)\}: P=P_1 \times P_2 \in \calp\big\}.\]
Namely, the quantity $\iota(\calp)$ is the minimal density of each side of each rectangle $P_1\times P_2$ belonging to the partition $\calp$.

Now recall that a \emph{cut matrix} is a matrix $g \colon [n_1] \times [n_2] \to \rr$ for which there exist two sets $S \subseteq [n_1]$ and
$T \subseteq [n_2]$, and a real number $c$ such that $g=c \cdot \boldsymbol{1}_{S \times T};$ the set $S \times T$ is called the \emph{support}
of the matrix $g$. Also recall that for every matrix $f \colon [n_1] \times [n_2] \to \rr$ the \emph{cut norm} of $f$ is the quantity
\[\|f\|_{\square} = \max_{\substack{S \subseteq [n_1]\\ T \subseteq[n_2]}}\, \Big|\sum_{(x_1,x_2) \in S \times T} f(x_1,x_2) \Big|=
(n_1\cdot n_2)  \cdot  \max_{\substack{S \subseteq [n_1]\\ T \subseteq[n_2]}} \, \Big| \int_{S \times T} \! f \, d\bmu \Big| .\]

Finally, let  $f \colon [n_1] \times [n_2] \to \{0,1\}$ be a matrix and let $\calp$ be a partition of $[n_1] \times [n_2]$ with $\calp \subseteq \cals.$
Recall that the \emph{conditional expectation} of $f$ with respect to $\cala_\calp$ is defined by
\[\ave(f \, | \,\cala_\calp) = \sum_{P \in \calp} \frac{\int_P f \,d\bmu}{\bmu(P)}\, \boldsymbol{1}_{P}.\]
Notice, in particular, that $\ave(f \, |\,\cala_\calp)$ is a sum of cut matrices with disjoint supports; this observation will be useful later on.
Also note that if $1 \mik p < \infty,$  then we have
\[\|\ave(f \, | \,\cala_\calp)\|_{L_p} =\Big(\sum_{P \in \calp}\Big|\frac{\int_P f \,d\bmu}{\bmu(P)}\Big|^{p}\, \bmu(P)\Big)^{1/p}\] while if
$p = \infty,$ then
\[\|\ave(f \, | \,\cala_\calp)\|_{L_\infty} = \max\Big\{ \Big|\frac{\int_P f \,d\bmu}{\bmu(P)}\Big|: P \in \calp\Big\}. \]
In particular, observe that $\|f\|_{L_1}$ is equal to the \emph{density} of $f$, that is, the number of ones in the matrix divided by $n_1\cdot n_2$.
Also notice that $\|f\|_{\square} = \|f\|^p_{L_p} \cdot (n_1\cdot n_2)$ for every $1\mik p<\infty$.

\subsubsection{ \ }

We are now in a position to introduce the class of $\{0,1\}$ matrices which we consider in this paper.
\begin{defn}[$L_p$ regular matrices \cite{BCCZ}] \label{defn1.1}
Let $0 < \eta \mik 1,$ $C\meg 1$ and $1 \mik p \mik \infty$. A~matrix $f \colon [n_1] \times [n_2] \to \{0,1\}$ is called \emph{$(C,\eta,p)$-regular}
$($or simply \emph{$L_p$ regular} if $C$ and $\eta$ are understood$)$ if for every partition $\calp$ of $[n_1] \times [n_2]$ with
$\calp \subseteq  \cals$ and $\iota(\calp) \meg \eta$ we have
\begin{equation} \label{eq1.1}
\|\ave(f \,|\,\cala_\calp) \|_{L_p} \mik C\, \|f\|_{L_1}.
\end{equation}
\end{defn}
Notice that, by the monotonicity of the $L_p$ norms, if $1 \mik p_1 \mik p_2 \mik \infty$ and $f$~is~$L_{p_2}$~regular, then $f$ is $L_{p_1}$ regular.
Thus, $L_p$ regularity is less restrictive when $p$ gets smaller. Also observe that for $p=1$ the previous definition is essentially of no interest
since  every $\{0,1\}$ matrix is $L_1$ regular. On the other hand, the case $p=\infty$ in Definition \ref{defn1.1} is equivalent to the
aforementioned $(C,\eta)\text{-boundedness}$ condition. Indeed, recall that a matrix $f \colon [n_1]\times [n_2] \to \{0,1\}$ is said to be
$(C,\eta)$-bounded if for every $S \subseteq [n_1] $ and every $T \subseteq [n_2]$ with $\mu_1(S) \meg \eta$ and $\mu_2(T) \meg \eta$ we have
\[ \frac{\int_{S \times T} f \,d\bmu} {\bmu(S \times T)} \mik C \, \|f\|_{L_1}. \]
We have the following simple fact. (See also Lemma \ref{lem3.1} below.)
\begin{fact}
Let\, $0 < \eta \mik 1$ and $C\meg 1$, and let $f \colon [n_1]\times [n_2] \to \{0,1\}$ be a matrix. If $f$ is $(C,\eta)$-bounded,
then $f$ is $(C,\eta,\infty)$-regular. Conversely, if $f$ is $(C,\eta,\infty)$-regular, then $f$ is $(4C,\eta)$-bounded.
\end{fact}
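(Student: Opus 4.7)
The plan is to treat the two implications separately; both rely on choosing an $\cals$-partition to which the regularity hypothesis can be applied.

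For the forward direction, assume $f$ is $(C,\eta)$-bounded and fix any partition $\calp\subseteq \cals$ of $[n_1]\times [n_2]$ with $\iota(\calp)\meg \eta$. Every block $P=P_1\times P_2 \in \calp$ satisfies $\mu_1(P_1),\mu_2(P_2)\meg \eta$ by the definition of $\iota$, so $(C,\eta)$-boundedness applied blockwise gives $\int_P f\,d\bmu/\bmu(P)\mik C\|f\|_{L_1}$. Taking the maximum over $P\in \calp$ yields $\|\ave(f\,|\,\cala_\calp)\|_{L_\infty}\mik C\|f\|_{L_1}$, which is the required $L_\infty$ bound.

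For the converse, assume $f$ is $(C,\eta,\infty)$-regular and fix $S\subseteq [n_1]$, $T\subseteq [n_2]$ with $\mu_1(S),\mu_2(T)\meg \eta$. Writing $S'=[n_1]\setminus S$ and $T'=[n_2]\setminus T$, I split into three cases according to whether $\mu_1(S')$ and $\mu_2(T')$ are $\meg \eta$. If both are, then $\{S\times T,\,S\times T',\,S'\times T,\,S'\times T'\}$ is an $\cals$-partition with $\iota\meg \eta$, and applying $L_\infty$ regularity to it gives the stronger estimate $\int_{S\times T}f\,d\bmu/\bmu(S\times T)\mik C\|f\|_{L_1}$. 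If exactly one fails, say $\mu_1(S')<\eta$ while $\mu_2(T')\meg \eta$, then $\mu_1(S)$ exceeds both $\eta$ and $1-\eta$ and is therefore $\meg 1/2$; applying $L_\infty$ regularity to $\{[n_1]\times T,\,[n_1]\times T'\}$ and then dividing by $\bmu(S\times T)=\mu_1(S)\mu_2(T)$ yields the bound $2C\|f\|_{L_1}$.

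The remaining case—when both $\mu_1(S')<\eta$ and $\mu_2(T')<\eta$—is where the factor of $4$ in the statement comes from: no nontrivial $\cals$-partition with $\iota\meg \eta$ separates $S\times T$ from its complement, so one must abandon the regularity hypothesis altogether and rely on the pointwise bound $f\mik 1$. Here $\mu_1(S),\mu_2(T)$ both exceed $\max(\eta,1-\eta)\meg 1/2$, so $\bmu(S\times T)\meg 1/4$ and $\int_{S\times T}f\,d\bmu/\bmu(S\times T)\mik 4\|f\|_{L_1}\mik 4C\|f\|_{L_1}$. No step is subtle; the only thing to be careful with is making the case analysis exhaustive and verifying that each partition used is genuinely an $\cals$-partition with $\iota\meg \eta$.
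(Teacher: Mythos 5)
Your proof is correct and follows essentially the same route the paper intends: the forward direction is the blockwise application of boundedness, and the converse is exactly the rectangle-completion case analysis underlying Lemma \ref{lem3.1} (complete $S\times T$ to an $\cals$-partition with $\iota\meg\eta$ when possible, and absorb a factor of at most $2$ for each side whose complement is too small, using $\max(\eta,1-\eta)\meg 1/2$).
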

Between the extreme cases ``$p=1$" and ``$p=\infty$"\!, there is a large class of sparse matrices which are very well behaved.
The examples which are easiest to grasp are random. Specifically, by \cite[Theorem 2.14]{BCCZ}, for every symmetric measurable
function $W\colon [0,1]\times [0,1]\to \rr^+$ with $W\in L_p$ $(1< p\mik \infty)$ and every positive integer $n$ there exists
a natural model\footnote{This model encompasses the classical Erd\H{o}s--R\'{e}nyi model---see, e.g., \cite{BJR}.} of sparse random
$n$-by-$n$ $\{0,1\}$ matrices which are $L_p$~regular asymptotically almost surely. (On the other hand, if $W\notin L_p$, then a typical
matrix in this model in not $L_p$ regular.) Further (deterministic) examples, which are relevant from a number theoretic perspective,
are given in \cite{DKK3}.

\subsection{The main result}

The following theorem is the main result of this paper.
\begin{thm} \label{thm1.3}
There exist absolute constants $a_1,a_2 > 0$, an algorithm and a polynomial\,\footnote{Here, and in the rest of this paper,
by the term \textit{polynomial} we mean a real polynomial $\Pi$ with non-negative coefficients, that is, $\Pi(x)=a_dx^d+\dots+a_1 x+a_0$
where $d\in\nn$ and $a_0,\dots,a_d\in\rr^+$. Moreover, unless otherwise stated, we will assume that the degree $d$ and
the coefficients $a_0,\dots,a_d$ are absolute and independent of the rest of the parameters.} $\Pi_0$ such that the following holds.
Let $0 < \ee < 1/2$ and $C\meg 1$. Also let $1 < p \mik \infty$, set $p^\dagger=\min \{2,p\}$ and let $q$ denote
the conjugate exponent of $p^\dagger$  $($that is, $1/p^\dagger + 1/q =1$$)$. We set
\begin{equation} \label{eq1.2}
\tau= \Big\lceil\frac{a_1 \cdot C^2}{(p^\dagger-1) \, \ee^2}\Big\rceil \ \text{ and } \
\eta= \left(\frac{a_2 \cdot \ee}{C}\right)^{\sum_{i=1}^{\tau+1}(\frac{2}{p^\dagger}+1)^{i-1}q^i}.
\end{equation}
If\, we input
\begin{itemize}
\item[$\mathtt{INP}$:] a $(C,\eta,p)$-regular matrix $f \colon [n_1] \times [n_2] \to \{0,1\}$,
\end{itemize}
then the algorithm outputs
\begin{itemize}
\item[$\mathtt{OUT}$:] a partition $\calp $ of $[n_1] \times [n_2]$ with $\calp \subseteq \cals$, $|\calp| \mik 4^\tau$
and $\iota(\calp) \meg \eta,$ such that
\begin{equation} \label{eq1.3}
\|f - \ave(f \,|\,\cala_\calp)\|_{\square} \mik \ee \|f\|_{\square}.
\end{equation}
\end{itemize}
This algorithm has running time $(\tau\, 4^\tau)\cdot \Pi_0(n_1 \cdot n_2).$
\end{thm}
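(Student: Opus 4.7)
The plan is to follow the iterative refinement paradigm of Coja-Oghlan, Cooper and Frieze~\cite{CCF} (for the $L_\infty$ case) with their $L_2$-energy replaced by the $L_{p^\dagger}$-energy
\[ E_i \coloneqq \|\ave(f \,|\, \cala_{\calp_i})\|_{L_{p^\dagger}}^{p^\dagger}. \]
We construct a sequence $\calp_0, \calp_1, \dots$ of partitions in $\cals$, with $\calp_0 = \{[n_1]\times [n_2]\}$ and $\iota(\calp_i) \meg \eta_i$ for a carefully chosen decreasing sequence $\eta_i$. Since $p^\dagger \mik p$, Jensen's inequality combined with the $L_p$-regularity of $f$ yields $E_i \mik (C\,\|f\|_{L_1})^{p^\dagger}$ at every iteration, giving a uniform ceiling for the energy.

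At stage $i$ we run a polynomial-time approximate cut-norm oracle (as in~\cite{FK,CCF}) on $f - \ave(f\,|\,\cala_{\calp_i})$; the oracle either certifies $\|f - \ave(f\,|\,\cala_{\calp_i})\|_{\square} \mik \ee\,\|f\|_{\square}$ (in which case we halt with $\calp = \calp_i$), or produces a pair $(S,T)$ witnessing a cut-norm excess of order $\ee\,\|f\|_{\square}$. In the latter case we refine $\calp_i$ by intersecting each of its rectangles with the four cells of $\{S, [n_1]\setminus S\}\times\{T, [n_2]\setminus T\}$, yielding at most $4|\calp_i|$ new rectangles; then we prune, aggregating those with one side of density below $\eta_{i+1}$. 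A careful choice of $\eta_{i+1}$ keeps the cut-norm damage from pruning below a small fraction of the slack.

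The analytic heart is an energy-increment lemma: each genuine refinement advances the energy by
\[ E_{i+1} - E_i \meg c\cdot (p^\dagger - 1)\cdot \Big(\frac{\ee}{C}\Big)^{q}\cdot \|f\|_{L_1}^{p^\dagger} \]
for an absolute constant $c>0$. When $p^\dagger = 2$ (i.e., $p\meg 2$), this is the Pythagorean identity applied to the $L_2$-orthogonal projection onto $\cala_{\calp_{i+1}}$, followed by a H\"older bound (with conjugate exponent $q=2$) converting the cut-norm witness into an $L_2$-gap. When $1 < p^\dagger <2$, Pythagoras is replaced by the $p^\dagger$-uniform convexity of $L_{p^\dagger}$ (a Clarkson-type inequality), which is the source of the universal $(p^\dagger - 1)$ factor in the bound, and the H\"older conversion is performed with the conjugate exponent $q = p^\dagger/(p^\dagger-1)$; this is where the exponent $q^i$ in the definition of $\eta$ enters. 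Comparing the per-step increment with the ceiling $E_i \mik (C\|f\|_{L_1})^{p^\dagger}$ bounds the total number of iterations by $\tau$ as in~(\ref{eq1.2}).

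The main obstacle is the energy-increment lemma in the regime $1 < p^\dagger <2$, where Pythagoras is unavailable: one has to descend to uniform convexity and then translate back to the cut norm via H\"older, an interplay that simultaneously produces the $(p^\dagger - 1)$ denominator in $\tau$ and the tower-like nesting of exponents in $\eta$. A secondary bookkeeping task is the precise recursion for $\eta_i$: at each pruning one must control the cut-norm damage by the current energy gap while keeping room for the next H\"older conversion, and this forces a super-exponential shrinkage whose explicit form is the exponent $\sum_{i=1}^{\tau+1}(2/p^\dagger+1)^{i-1}q^i$ appearing in~(\ref{eq1.2}).
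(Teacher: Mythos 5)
Your overall architecture---an Alon--Naor-type cut-norm oracle, refinement of the current rectangle partition by the witness pair $(S,T)$, and an $L_{p^\dagger}$-energy increment whose $(p^\dagger-1)$ loss comes from convexity of $L_{p^\dagger}$---is the same as the paper's, which runs exactly this loop and proves termination by applying the Ricard--Xu martingale inequality $(\sum_i\|d_i\|^2_{L_{p^\dagger}})^{1/2}\mik(p^\dagger-1)^{-1/2}\|\sum_i d_i\|_{L_{p^\dagger}}$ to $d_i=\ave(f\,|\,\cala_{\calp_i})-\ave(f\,|\,\cala_{\calp_{i-1}})$. However, your energy-increment lemma is wrong as stated. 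For $1<p^\dagger<2$ the space $L_{p^\dagger}$ is \emph{not} $p^\dagger$-uniformly convex; it is $2$-uniformly convex with constant of order $\sqrt{p^\dagger-1}$ (power-type $p^\dagger$ occurs for the modulus of \emph{smoothness} in this range). The usable one-step inequality is for the \emph{squared} norm, $\|\ave(f|\cala_{\calp_{i+1}})\|^2_{L_{p^\dagger}}\meg\|\ave(f|\cala_{\calp_{i}})\|^2_{L_{p^\dagger}}+(p^\dagger-1)\|d_{i+1}\|^2_{L_{p^\dagger}}$ (which telescopes to Ricard--Xu), not for the $p^\dagger$-th power you chose; and the gap extracted from the oracle is $\|d_{i+1}\|_{L_{p^\dagger}}\gtrsim\ee\,\|f\|_{L_1}$, with no factor $(\ee/C)^q$. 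With your claimed increment, dividing the ceiling $(C\|f\|_{L_1})^{p^\dagger}$ by $c(p^\dagger-1)(\ee/C)^q\|f\|_{L_1}^{p^\dagger}$ gives an iteration count of order $C^{p^\dagger+q}/((p^\dagger-1)\ee^{q})$, which is not of the form \eqref{eq1.2}.

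The more serious gap is the sentence ``a careful choice of $\eta_{i+1}$ keeps the cut-norm damage from pruning below a small fraction of the slack,'' which conceals the entire sparse difficulty. When the witness rectangle $A=S\times T$ is replaced by a union $B$ of cells of the refined partition, one must bound $\int_{A\triangle B}f\,d\bmu$; for a sparse $\{0,1\}$ matrix this is \emph{not} small merely because $\bmu(A\triangle B)$ is small, since $f$ may concentrate all of its mass on a set of measure $o(1)$. The paper handles this with a H\"{o}lder-type inequality valid only for $L_p$-regular matrices and only on rectangles, namely $\int_A f\,d\bmu\mik C\|f\|_{L_1}(\bmu(A)+6\eta)^{1/q}$ for $A\in\cals$. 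This forces $A\triangle B$ to be decomposed cell by cell into rectangles, and it is the source of both the $q^i$ factors (from the exponent $1/q$) and the $(\tfrac{2}{p^\dagger}+1)^{i-1}$ factors (from the bound $\sum_{P\in\calp}\bmu(P)^{1/q}\mik\iota(\calp)^{-2/p^\dagger}$ over the cells of the current partition) in the exponent of $\eta$. Without this lemma, or an equivalent substitute, the refinement step of your argument does not close.
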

Theorem \ref{thm1.3} extends \cite[Theorem 1]{CCF} which corresponds to the case $p=\infty$\footnote{Actually, the argument in \cite{CCF}
works for the more general case $p\meg 2$. We also remark that the cut matrices obtained by \cite[Theorem 1]{CCF} do not necessarily have disjoint supports,
but this can be easily arranged---see \cite[Corollary 1]{CCF} for more details.}. Note that, by \eqref{eq1.2} and \eqref{eq1.3}, the matrix $f$ is well
approximated by a sum of at most $4^\tau$ cut matrices with disjoint supports and, moreover, the positive integer $\tau$ is independent of the size
of $f$ and its density. Also observe that, as expected, the running time of the algorithm in Theorem \ref{thm1.3} increases as $p$ decreases to $1$.

\subsection{Organization of the paper}

The paper is organized as follows. In Section 2 we recall some results which are needed for the proof of Theorem \ref{thm1.3}, and in Section 3
we present some preparatory lemmas. The proof of Theorem \ref{thm1.3} is completed in Section 4. Finally, in Section 5 we present applications
for tensors and sparse MAX-CSP instances.


\section{Background material}

\numberwithin{equation}{section}

\subsection{Martingale difference sequences}

Recall that a finite sequence $(d_i)_{i=0}^n$ of integrable real-valued random variables on  a probability space $(X,\Sigma,\mu)$ is said to be
a \emph{martingale difference sequence} if there exists a martingale $(f_i)_{i=0}^n$ such that $d_0=f_0$ and $d_i=f_i-f_{i-1}$ if $n\meg 1$ and
$i\in [n]$. We will need the following result due to Ricard and Xu \cite{RX} which can be seen as an extension of the basic fact that martingale
difference sequences are orthogonal in $L_2$. (See also \cite[Appendix A]{DKK1} for a discussion on this result and its proof.)
\begin{prop} \label{prop2.1}
Let $(X,\Sigma,\mu)$ be a probability space and $1<p \mik 2$. Then for every martingale difference
sequence $(d_i)_{i=0}^n$ in $L_p(X,\Sigma,\mu)$ we have
\begin{equation} \label{eq2.1}
\Big( \sum_{i=0}^n \|d_i\|^2_{L_p} \Big)^{1/2} \mik \Big(\frac{1}{p-1}\Big)^{1/2} \, \big\| \sum_{i=0}^n d_i\big\|_{L_p}.
\end{equation}
\end{prop}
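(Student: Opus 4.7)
My plan is to derive the estimate directly from the sharp $2$-uniform convexity of $L_p$, applied inductively along the martingale so that the martingale property automatically eliminates all first-order cross terms.

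The essential analytic input is the ``differential'' form of the $2$-uniform convexity of $L_p$ for $1 < p \mik 2$: for every $x, y \in L_p(X, \Sigma, \mu)$,
\[\|x + y\|_{L_p}^2 \meg \|x\|_{L_p}^2 + 2\int J(x)\,y\,d\mu + (p-1)\,\|y\|_{L_p}^2,\]
where $J(x) := \|x\|_{L_p}^{2-p}\,|x|^{p-1}\mathrm{sign}(x) \in L_q$ is the normalized duality mapping (with $q$ the conjugate exponent of $p$). This is a Clarkson-type estimate that can be obtained by integrating a suitable pointwise inequality; crucially, the coefficient $p-1$ is the sharp $2$-uniform convexity modulus of $L_p$ and is precisely what produces the factor $(p-1)^{-1/2}$ in \eqref{eq2.1}.

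With this inequality in hand, the plan is to apply it step by step along the martingale. Writing $f_i := \sum_{j=0}^{i} d_j$, I would plug in $x := f_{i-1}$ and $y := d_i$ for each $i \meg 1$. Since $J(f_{i-1})$ is a deterministic function of $f_{i-1}$, it is $\Sigma_{i-1}$-measurable, and the martingale property $\ave(d_i \mid \Sigma_{i-1}) = 0$ yields
\[\int J(f_{i-1})\,d_i\,d\mu = \ave\big(J(f_{i-1})\,\ave(d_i \mid \Sigma_{i-1})\big) = 0,\]
so the cross term drops out and leaves the one-step recursion
\[\|f_i\|_{L_p}^2 \meg \|f_{i-1}\|_{L_p}^2 + (p-1)\,\|d_i\|_{L_p}^2.\]
Telescoping from $i=1$ to $i=n$, and using the base estimate $\|f_0\|_{L_p}^2 = \|d_0\|_{L_p}^2 \meg (p-1)\|d_0\|_{L_p}^2$ (valid because $p - 1 \mik 1$), I would arrive at
\[\Big\|\sum_{i=0}^n d_i\Big\|_{L_p}^2 = \|f_n\|_{L_p}^2 \meg (p-1)\sum_{i=0}^n \|d_i\|_{L_p}^2,\]
from which \eqref{eq2.1} follows by taking square roots.

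The only non-trivial ingredient is the differential $2$-uniform convexity inequality with the sharp constant $p-1$; once it is secured, the martingale iteration is mechanical. Pinning down this constant is, in my view, the main obstacle: weaker approaches---such as duality against the $2$-uniform smoothness of $L_q$, or Burkholder's square-function inequality combined with the elementary pointwise comparison between $\ell_p$ and $\ell_2$ norms---introduce extra multiplicative factors that, although harmless for the order of the bound, would no longer match the true $2$-uniform convexity modulus of $L_p$ and hence would not produce the sharp form of \eqref{eq2.1}.
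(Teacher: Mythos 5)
You should first be aware that the paper does not prove Proposition \ref{prop2.1} at all: it is imported from Ricard and Xu \cite{RX}, with a pointer to \cite[Appendix A]{DKK1} for a discussion of the proof. So the question is whether your outline stands on its own. The reduction you carry out is correct and is the classical commutative route to ``martingale cotype $2$ of $L_p$ with constant $(p-1)^{-1/2}$'': granted the inequality $\|x+y\|_{L_p}^2 \meg \|x\|_{L_p}^2 + 2\int J(x)\,y\,d\mu + (p-1)\|y\|_{L_p}^2$, the function $J(f_{i-1})$ is indeed $\Sigma_{i-1}$-measurable and lies in $L_q$ (with $\|J(f_{i-1})\|_{L_q}=\|f_{i-1}\|_{L_p}$, so the pairing is legitimate), the tower property kills the cross term, and the telescoping together with $p-1\mik 1$ gives exactly \eqref{eq2.1} with the sharp constant. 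This is the commutative shadow of what Ricard and Xu prove; their formulation is the equivalent conditional-expectation inequality $\|g\|_{L_p}^2\meg\|\ave(g\,|\,\Sigma')\|_{L_p}^2+(p-1)\|g-\ave(g\,|\,\Sigma')\|_{L_p}^2$, which your differential inequality implies by the same cancellation.

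The genuine gap is that the differential $2$-uniform convexity inequality, which carries the entire analytic content of the proposition, is asserted rather than proved, and the justification offered (``integrating a suitable pointwise inequality'') does not work as stated: the duality map $J(x)=\|x\|_{L_p}^{2-p}|x|^{p-1}\mathrm{sign}(x)$ involves the global normalization $\|x\|_{L_p}^{2-p}$, and squared $L_p$ norms are not integrals of pointwise quantities, so no pointwise inequality integrates to your estimate. The honest derivation goes through the Ball--Carlen--Lieb two-point inequality
\[\tfrac12\big(\|u\|_{L_p}^2+\|v\|_{L_p}^2\big)\meg\big\|\tfrac{u+v}{2}\big\|_{L_p}^2+(p-1)\big\|\tfrac{u-v}{2}\big\|_{L_p}^2,\]
whose proof combines a non-trivial scalar two-point inequality with the superadditivity of $\|\cdot\|_{L_{p/2}}$ on nonnegative functions and the power-mean inequality; one then observes that this two-point inequality says precisely that $t\mapsto\|x+ty\|_{L_p}^2-(p-1)t^2\|y\|_{L_p}^2$ is midpoint convex, hence convex, and combines this with the smoothness of $L_p$ at $x\neq 0$ (so that the one-sided derivative at $t=0$ equals $2\int J(x)\,y\,d\mu$) to obtain your differential form with the same constant. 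If you supply that chain, or simply cite the two-point inequality, your argument closes; as written, the step you treat as an ingredient to be ``secured'' is exactly the theorem.
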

We point out that the constant $(p-1)^{-1/2}$ appearing in the right-hand side of~\eqref{eq2.1} is best possible.

\subsection{The algorithmic version of Grothendieck's inequality}

We will need the following result due to Alon and Naor \cite{AN}.
\begin{prop} \label{Naor}
There exist a constant $a_0 >0,$ an algorithm and a polynomial $\Pi_{\mathrm{AN}}$ such that the following holds.
If we input
\begin{itemize}
\item[$\mathtt{INP}$:] a matrix $f \colon [n_1] \times [n_2] \to \rr$,
\end{itemize}
then the algorithm outputs
\begin{itemize}
\item[$\mathtt{OUT}$:] a set $A \in \cals$ such that $(n_1\cdot n_2) \big|\int_A f\, d\bmu \big|\meg  a_0 \|f\|_{\square}$.
\end{itemize}
This algorithm has running time $\Pi_{\mathrm{AN}}(n_1\cdot n_2)$.
\end{prop}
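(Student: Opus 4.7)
The plan is to reduce the cut-norm optimization to a semidefinite program (SDP), solve that SDP in polynomial time, round its solution via Krivine's scheme, and convert the resulting signs into a rectangle. Grothendieck's inequality underlies the fact that the rounding loses only a constant factor.

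Step 1 (reduction to a $\pm 1$ bilinear problem). Define
\[M(f):=\max_{\sigma\in\{\pm 1\}^{n_1},\, \tau\in\{\pm 1\}^{n_2}}\Big|\sum_{i,j} f(i,j)\sigma_i\tau_j\Big|.\]
Substituting $\boldsymbol{1}_S=(1+\sigma)/2$, $\boldsymbol{1}_T=(1+\tau)/2$ and expanding, $\sum_{S\times T} f$ becomes a sum of four terms each bounded by $M(f)/4$, so $\|f\|_\square\mik M(f)$. Conversely, given any $(\sigma,\tau)$ with $|\sum f\sigma_i\tau_j|=V$, the decomposition
\[\sum_{i,j}f(i,j)\sigma_i\tau_j = \sum_{S_+\!\times T_+}\!f - \sum_{S_+\!\times T_-}\!f - \sum_{S_-\!\times T_+}\!f + \sum_{S_-\!\times T_-}\!f\]
(with $S_\pm=\{i:\sigma_i=\pm 1\}$, $T_\pm=\{j:\tau_j=\pm 1\}$) yields by pigeonhole an explicit rectangle $A\in\cals$ with $(n_1 n_2)|\int_A f\,d\bmu|\meg V/4$. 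Step 2 (SDP relaxation): the semidefinite program
\[M^*(f):=\max\Big\{\sum_{i,j}f(i,j)\langle u_i,v_j\rangle : \|u_i\|=\|v_j\|=1 \text{ in } \rr^{n_1+n_2}\Big\}\]
is solvable to additive accuracy $\delta$ in time polynomial in $n_1 n_2$ and $\log(1/\delta)$ via the ellipsoid method or interior-point solvers; the restriction $u_i=\sigma_i e_1$, $v_j=\tau_j e_1$ gives $M(f)\mik M^*(f)$.

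For Step 3 (Krivine rounding), I would compute from the near-optimal vectors $\{u_i, v_j\}$, in polynomial time, unit vectors $\{u'_i, v'_j\}$ in a larger Euclidean space satisfying $\langle u'_i, v'_j\rangle = \sin(\sinh^{-1}(1)\langle u_i,v_j\rangle)$; these are obtained via Krivine's tensor lift, using the Taylor expansion $\sin(\sinh^{-1}(1) t)=\sum_{k\meg 0}c_k t^{2k+1}$ (whose coefficients satisfy $\sum_k |c_k|=\sinh(\sinh^{-1}(1))=1$, the exact identity making the normalisation work), realised via odd tensor powers $u_i^{\otimes(2k+1)}, v_j^{\otimes(2k+1)}$ and truncated after $O(\log(n_1 n_2))$ terms. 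Drawing a standard Gaussian $g$ in the ambient space and setting $\sigma_i:=\mathrm{sign}\langle g,u'_i\rangle$, $\tau_j:=\mathrm{sign}\langle g,v'_j\rangle$, the identity $\ave[\mathrm{sign}\langle g,x\rangle\,\mathrm{sign}\langle g,y\rangle]=\tfrac{2}{\pi}\arcsin\langle x,y\rangle$ together with $|\sinh^{-1}(1)\langle u_i,v_j\rangle|<\pi/2$ gives
\[\ave\sum_{i,j}f(i,j)\sigma_i\tau_j=\tfrac{2\sinh^{-1}(1)}{\pi}\sum_{i,j}f(i,j)\langle u_i,v_j\rangle=\tfrac{2\sinh^{-1}(1)}{\pi}M^*(f).\]
Hence some deterministic $g$ produces signs with $|\sum f\sigma_i\tau_j|\meg \tfrac{2\sinh^{-1}(1)}{\pi}\|f\|_\square$; combined with Step 1, this yields $A\in\cals$ with $(n_1 n_2)|\int_A f\,d\bmu|\meg a_0\|f\|_\square$ for the absolute constant $a_0=\sinh^{-1}(1)/(2\pi)$.

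The main obstacle is converting the expected-value guarantee of the Gaussian rounding into a deterministic polynomial-time algorithm. The cleanest route is the method of conditional expectations: discretise $g$ onto a polynomially fine grid and fix its coordinates one at a time, at each step choosing the grid value that maximises the current conditional expectation $\ave_{g''}[\sum f(i,j)\sigma_i(g',g'')\tau_j(g',g'')\mid g']$. This conditional expectation is computable in closed form from the $\arcsin$ identity applied to half-space Gaussian measures, hence evaluable in polynomial time. The finite-precision errors arising from the SDP accuracy $\delta$, the Taylor-series truncation, and the grid discretisation together contribute only $\mathrm{poly}(n_1 n_2)$ overhead, and are absorbed into the polynomial $\Pi_{\mathrm{AN}}$ in the final running-time bound.
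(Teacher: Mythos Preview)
The paper does not prove this proposition at all: it is quoted in Section~2 as background material and attributed to Alon and Naor~\cite{AN}, with no argument given. So there is no ``paper's proof'' to compare against; your write-up supplies a sketch of the Alon--Naor argument itself, which is more than the paper attempts.

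As a sketch of that argument your outline is essentially correct: the reduction $\|f\|_\square\mik M(f)$ with the four-rectangle pigeonhole giving the factor $1/4$, the SDP relaxation, Krivine's choice $c=\sinh^{-1}(1)$ so that $\sum_k c^{2k+1}/(2k+1)!=\sinh(c)=1$, and the Grothendieck identity $\ave[\mathrm{sign}\langle g,x\rangle\,\mathrm{sign}\langle g,y\rangle]=\tfrac{2}{\pi}\arcsin\langle x,y\rangle$ are all right, and the resulting constant $a_0=\sinh^{-1}(1)/(2\pi)$ is the one Alon--Naor obtain (before the pigeonhole loss they get approximation ratio $0.56$, and $2\sinh^{-1}(1)/\pi\approx 0.561$).

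Two implementation points are glossed over more than they should be. First, the tensor-power space $\bigoplus_k H^{\otimes(2k+1)}$ has dimension exponential in~$k$, so even with the $O(\log(n_1n_2))$ truncation you cannot write down $u'_i,v'_j$ explicitly in polynomial time; one must instead work only through the $(n_1+n_2)\times(n_1+n_2)$ Gram matrix $\big(\sin(c\langle\cdot,\cdot\rangle)\big)$ and sample $\langle g,u'_i\rangle,\langle g,v'_j\rangle$ via a Cholesky factor of it. Second, the conditional expectations you invoke are bivariate Gaussian orthant probabilities rather than literal closed forms, so ``computable in closed form'' overstates it---they are efficiently approximable, which suffices, but the derandomisation in \cite{AN} actually proceeds differently and requires more care than a one-line appeal to conditional expectations suggests. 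Neither point is fatal to the approach, but both would need to be addressed for the running-time claim $\Pi_{\mathrm{AN}}(n_1n_2)$ to be justified.
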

The constant $a_0$ in Proposition \ref{Naor} is closely related to Grothendieck's constant~$K_G$ (see, e.g., \cite{PG}).


\section{Preparatory Lemmas}

\numberwithin{equation}{section}

In this section we prove some preparatory results concerning $L_p$ regular matrices. We begin with the following lemma.
\begin{lem} \label{lem3.1}
There exist an algorithm and a polynomial\, $\Pi_1$ such that the following holds. Let $X_1,X_2$ be nonempty finite sets,
and let\, $0 < \vartheta <1/2$. If we input
\begin{itemize}
\item[$\mathtt{INP}$:] two sets $A_1 \subseteq X_1$ and $A_2 \subseteq X_2$ with $\mu_{X_1}(A_1 ) \meg \vartheta$ and
$\mu_{X_2}(A_2) \meg \vartheta$,
\end{itemize}
then the algorithm outputs
\begin{itemize}
\item[$\mathtt{OUT1}$:] a partition $\calq \subseteq \cals$ with $|\calq| \mik 4$ and $\iota(\calq) \meg \vartheta,$ and
\item[$\mathtt{OUT2}$:] a set $B\in \calq$ such that $A_1 \times A_2 \subseteq B$ and
$\mu_{X_1\times X_2}\big(B \setminus (A_1 \times A_2)\big) \mik 2\vartheta.$
\end{itemize}
This algorithm has running time\, $\Pi_1(|X_1| \cdot |X_2|).$
\end{lem}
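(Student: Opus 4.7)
The plan is to do a four-case analysis on the complementary sizes $\mu_{X_1}(A_1^c)$ and $\mu_{X_2}(A_2^c)$, using the fact that whenever a complement is small (below $\vartheta$), we can absorb that side of the rectangle into the whole ambient set without losing much measure. The algorithm will simply compute $\mu_{X_1}(A_1)$ and $\mu_{X_2}(A_2)$ (which can be done by counting, in time linear in $|X_1|+|X_2|$), compare each complement to $\vartheta$, and output the appropriate partition.

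The four cases I would handle are as follows. If both $\mu_{X_1}(A_1^c)\meg \vartheta$ and $\mu_{X_2}(A_2^c)\meg \vartheta$, then the canonical ``quadrant'' partition
\[\calq=\{A_1\times A_2,\; A_1\times A_2^c,\; A_1^c\times A_2,\; A_1^c\times A_2^c\}\]
already satisfies $|\calq|\mik 4$ and $\iota(\calq)\meg \vartheta$, and we take $B=A_1\times A_2$, for which $\mu_{X_1\times X_2}(B\setminus(A_1\times A_2))=0$. If $\mu_{X_1}(A_1^c)<\vartheta$ but $\mu_{X_2}(A_2^c)\meg \vartheta$, we enlarge along the first coordinate and take $\calq=\{X_1\times A_2, X_1\times A_2^c\}$ with $B=X_1\times A_2$; then $\mu_{X_1\times X_2}(B\setminus(A_1\times A_2))=\mu_{X_1}(A_1^c)\cdot\mu_{X_2}(A_2)<\vartheta$. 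The symmetric case is analogous.

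The last case is when both complements are small, i.e.\ $\mu_{X_1}(A_1^c)<\vartheta$ and $\mu_{X_2}(A_2^c)<\vartheta$. Here I would take the trivial partition $\calq=\{X_1\times X_2\}$ with $B=X_1\times X_2$. Its defect is
\[\mu_{X_1\times X_2}\big(B\setminus (A_1\times A_2)\big)=1-\mu_{X_1}(A_1)\mu_{X_2}(A_2)<1-(1-\vartheta)^2=2\vartheta-\vartheta^2\mik 2\vartheta,\]
and $\iota(\calq)=1\meg\vartheta$ trivially.

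There is essentially no real obstacle here: the statement is combinatorial and reduces to a bookkeeping argument on the sizes of $A_1^c$ and $A_2^c$. The only point that requires any care is verifying in each case that both the side-density condition $\iota(\calq)\meg\vartheta$ and the defect bound $\mu_{X_1\times X_2}(B\setminus(A_1\times A_2))\mik 2\vartheta$ hold simultaneously, which the case split above arranges. The running-time bound follows trivially since computing the two cardinalities $|A_1|,|A_2|$ and performing the case distinction is polynomial in $|X_1|\cdot|X_2|$.
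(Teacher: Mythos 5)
Your proposal is correct and is essentially the paper's own argument: the same four-way case split (quadrant partition, two ``strip'' partitions, trivial partition) with the same choices of $B$ and the same defect estimates, differing only in the immaterial boundary convention $\mu_{X_i}(A_i^c)\meg\vartheta$ versus $\mu_{X_i}(A_i)<1-\vartheta$. Nothing further is needed.
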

\begin{proof}
We distinguish the following four (mutually exclusive) cases.
\medskip

\noindent \textsc{Case 1:} \textit{We have $\mu_{X_1}(A_1) < 1- \vartheta$ and $\mu_{X_2}(A_2) < 1- \vartheta$.} In this case the algorithm outputs
$\calq=\{A_1 \times A_2, (X_1 \setminus A_1) \times A_2, A_1 \times (X_2 \setminus A_2), (X_1 \setminus A_1) \times (X_2 \setminus A_2)\}$
and $B=A_1 \times A_2$. Notice that $\calq$ and $B$ satisfy the requirements of the lemma.
\medskip

\noindent \textsc{Case 2:} \textit{We have $\mu_{X_1}(A_1) < 1- \vartheta$ and $\mu_{X_2}(A_2) \meg 1- \vartheta$.} In this case the algorithm
outputs $\calq=\{A_1 \times X_2, (X_1 \setminus A_1 ) \times X_2\}$ and $B=A_1 \times X_2$. Again, it is easy to see that $\calq$ and $B$ satisfy
the requirements of the lemma.
\medskip

\noindent \textsc{Case 3:} \textit{We have $\mu_{X_1}(A_1) \meg 1- \vartheta$ and $\mu_{X_2}(A_2) < 1- \vartheta$.} This case is similar
to Case~2. In particular, we set $\calq=\{X_1 \times A_2, X_1\times (X_2 \setminus A_2 )\}$ and $B=X_1 \times A_2$.
\medskip

\noindent \textsc{Case 4:} \textit{We have $\mu_{X_1}(A_1) \meg 1- \vartheta$ and $\mu_{X_2}(A_2) \meg 1- \vartheta$.} In this case the algorithm
outputs $\calq=\{X_1 \times X_2\}$ and $B=X_1 \times X_2$. As before, it is easy to see that $\calq$ and $B$ are as desired.
\medskip

Finally, notice that the most costly part of this algorithm is to estimate the quantities $\mu_{X_1}(A_1)$ and $\mu_{X_2}(A_2)$, but of course
this can be done in polynomial time of $|X_1|\cdot |X_2|$. Thus, this algorithm will stop in polynomial time of $|X_1|\cdot |X_2|$.
\end{proof}
The next result is a H\"{o}lder-type inequality for $L_p$ regular matrices. To motivate this inequality, let $f\colon [n_1]\times [n_2]\to \{0,1\}$
be a matrix, let $1<p<\infty$, let $q$ denote its conjugate exponent and observe that, by H\"{o}lder's inequality, for every
$A\subseteq [n_1]\times [n_2]$ we have
\begin{equation} \label{e3.1}
\int_A f\, d\bmu \mik \|f\|_{L_1}^{1/p} \cdot \bmu(A)^{1/q}.
\end{equation}
Unfortunately, this estimate is not particularly useful if $f$ is sparse---that is, in the regime $\|f\|_{L_1}=o(1)$---since in this case
the quantity $\|f\|_{L_1}^{1/p}$ is \textit{not} comparable to the density $\|f\|_{L_1}$ of $f$. Nevertheless, we can improve upon \eqref{e3.1}
provided that the matrix $f$ is $L_p$ regular and $A\in\mathcal{S}$. Specifically, we have the following lemma (see also \cite[Proposition 4.1]{DKK2}).
\begin{lem} \label{lem3.2}
Let $0 < \eta <1/2$ and  $C\meg 1$. Also let $1< p \mik 2$ and let $q$ denote its conjugate exponent. Finally, let $f \colon [n_1]\times [n_2] \to \{0,1\}$
be  $(C,\eta,p)$-regular. Then for every $A \subseteq [n_1] \times [n_2]$ with $A \in \cals$ we have
\begin{equation} \label{eq3.2}
\int_A f \, d\bmu \mik C\, \|f\|_{L_1} (\bmu(A) + 6\eta)^{1/q}.
\end{equation}
\end{lem}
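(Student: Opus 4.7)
\medskip

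\noindent\textbf{Plan.} Write $A=A_1\times A_2$. The right-hand side of \eqref{eq3.2} is vacuous if $A_1$ or $A_2$ is empty, so assume both are nonempty. The strategy is to enlarge $A_1\times A_2$ into a rectangle which appears as one cell of a partition with $\iota(\cdot)\meg \eta$, then apply the $L_p$ regularity hypothesis to that partition to control the density of $f$ on the enlarged rectangle.

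\medskip

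\noindent\textbf{Step 1 (enlargement).} Define $\widetilde{A}_i\supseteq A_i$ as follows: if $\mu_i(A_i)\meg \eta$ set $\widetilde{A}_i=A_i$; otherwise add arbitrary elements of $[n_i]$ to $A_i$ until the resulting set satisfies $\mu_i(\widetilde{A}_i)\meg \eta$. Note that in the nontrivial regime one may assume $n_i\meg 1/\eta$, since otherwise any nonempty $A_i$ already has $\mu_i(A_i)\meg 1/n_i>\eta$. Consequently in either case $\mu_i(\widetilde{A}_i)-\mu_i(A_i)\mik 2\eta$.

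\medskip

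\noindent\textbf{Step 2 (apply Lemma \ref{lem3.1}).} With $\vartheta=\eta$, apply Lemma \ref{lem3.1} to $\widetilde{A}_1,\widetilde{A}_2$. This produces a partition $\calq\subseteq\cals$ of $[n_1]\times[n_2]$ with $|\calq|\mik 4$ and $\iota(\calq)\meg \eta$, and a block $B\in\calq$ such that $\widetilde{A}_1\times\widetilde{A}_2\subseteq B$ and $\bmu\big(B\setminus(\widetilde{A}_1\times\widetilde{A}_2)\big)\mik 2\eta$. Since $A\subseteq\widetilde{A}_1\times\widetilde{A}_2\subseteq B$, expanding the product $\mu_1(\widetilde{A}_1)\mu_2(\widetilde{A}_2)-\mu_1(A_1)\mu_2(A_2)$ as a telescoping sum and using Step 1 gives $\bmu(\widetilde{A}_1\times\widetilde{A}_2)\mik \bmu(A)+4\eta$, hence
\[
\bmu(B)\mik \bmu(A)+6\eta.
\]

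\medskip

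\noindent\textbf{Step 3 ($L_p$ regularity on $\calq$).} Because $\iota(\calq)\meg\eta$, the hypothesis gives $\|\ave(f\,|\,\cala_\calq)\|_{L_p}\mik C\|f\|_{L_1}$. The conditional expectation takes the constant value $\int_B f\,d\bmu/\bmu(B)$ on $B$, so extracting this single term from the $L_p$-norm expression yields
\[
\Big(\frac{\int_B f\,d\bmu}{\bmu(B)}\Big)^{\!p}\bmu(B)\mik \|\ave(f\,|\,\cala_\calq)\|_{L_p}^p\mik C^p\|f\|_{L_1}^p,
\]
and thus $\int_B f\,d\bmu\mik C\|f\|_{L_1}\bmu(B)^{1-1/p}=C\|f\|_{L_1}\bmu(B)^{1/q}$.

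\medskip

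\noindent\textbf{Step 4 (conclusion).} Combining Steps 2 and 3,
\[
\int_A f\,d\bmu\mik \int_B f\,d\bmu \mik C\|f\|_{L_1}\,\bmu(B)^{1/q}\mik C\|f\|_{L_1}\,(\bmu(A)+6\eta)^{1/q},
\]
which is the desired inequality.

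\medskip

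\noindent\textbf{Main obstacle.} The conceptual step is realising that one cannot apply $L_p$ regularity directly to a partition containing $A_1\times A_2$, because the sides $A_1$, $A_2$ may be much smaller than $\eta$; the fix is the enlargement in Step 1 combined with Lemma \ref{lem3.1}. The rest is bookkeeping, and the $6\eta$ slack simply absorbs the cost of (i) enlarging each side by at most $2\eta$ (worth up to $4\eta$ in the product) and (ii) the $2\eta$ overhead from Lemma \ref{lem3.1}. The essential analytic input is extracting one summand from $\|\ave(f\,|\,\cala_\calq)\|_{L_p}^p$, which converts an $L_p$ bound into a H\"older-type bound in terms of the conjugate exponent $q$.
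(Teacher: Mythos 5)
Your proof is correct and follows essentially the same route as the paper: enlarge any side of $A$ of density below $\eta$, invoke Lemma \ref{lem3.1} to embed the resulting rectangle in a cell of a partition with $\iota(\cdot)\meg\eta$, and extract the single term corresponding to that cell from $\|\ave(f\,|\,\cala_\calq)\|_{L_p}^p$. The only difference is organizational — the paper runs a four-case analysis according to which sides of $A$ are $\eta$-dense and reduces the small cases to the dense one, whereas you enlarge uniformly up front; the bookkeeping leading to the $6\eta$ slack matches in both versions.
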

\begin{proof}
Fix a nonempty subset $A$ of $[n_1] \times [n_2]$ with $A \in \cals$, and let $A_1\subseteq [n_1]$ and $A_2 \subseteq [n_2]$ such that
$A =A_1 \times A_2.$ If $\mu_1(A_1) \meg \eta$ and $\mu_2(A_2) \meg \eta,$ then we claim~that
\begin{equation} \label{eq3.3}
\int_A f \,d\bmu \mik C\, \|f\|_{L_1} ( \bmu(A) +2\eta)^{1/q}.
\end{equation}
Indeed, by Lemma \ref{lem3.1} applied for $X_1=[n_1]$ and $X_2=[n_2],$ we obtain a partition $\calq$ of $[n_1] \times [n_2] $ with $\calq \in \cals$
and $\iota(\calq)\meg \eta$, and a set $B \in \calq$ such that $A \subseteq B$ and $\bmu(B \setminus A) \mik 2\eta.$ By the $L_p$ regularity of $f$, we have
\[\frac{\int_B  f \,d\bmu}{\bmu(B)} \, \bmu(B)^{1/p} \mik \|\ave(f \, |\, \cala_\calq)\|_{L_p} \mik C\, \|f\|_{L_1} \]
and so
\[ \int_A f \, d\bmu \mik \int_B f \,d\bmu \mik C\, \|f\|_{L_1} \bmu(B)^{1/q} \mik C\, \|f\|_{L_1} (\bmu(A) + 2\eta)^{1/q}. \]

Next, we assume that $\mu_1 (A_1) \meg \eta$ and $\mu_2(A_2) < \eta$ and observe that we may select a set $B \subseteq [n_2]$ with $\eta <\mu_2(B) \mik 2\eta.$
Then, we have
\begin{eqnarray*}
\int_A f\, d\bmu & \mik & \int_{A_1 \times (A_2 \cup B)}  f \,d\bmu \stackrel{\eqref{eq3.3}}{\mik}
C\, \|f\|_{L_1} \big(\bmu\big(A_1 \times (A_2 \cup B)\big) + 2\eta\big)^{1/q}\\
& \mik & C\, \|f\|_{L_1} (\bmu(A) + 2\eta\, \mu_1(A_1) + 2\eta)^{1/q}  \mik  C\, \|f\|_{L_1} (\bmu(A) + 4\eta)^{1/q}.
\end{eqnarray*}
The case $\mu_1(A_1) < \eta$ and $\mu_2(A_2) \meg \eta$ is identical.

Finally, assume that $\mu_1(A_1) < \eta$ and $\mu_2(A_2) < \eta$, and observe that there exist $B_1 \subseteq [n_1]$ and $B_2 \subseteq [n_2]$
such that $\eta <\mu_1(B_1) \mik 2\eta$ and $\eta < \mu_2(B_2) \mik 2\eta.$ Then,
\begin{eqnarray*}
\int_A f\,d\bmu & \mik & \int_{(A_1 \cup B_1 )\times (A_2 \cup B_2)} f \,d\bmu \\
& \stackrel{\eqref{eq3.3}}{\mik} & C\, \|f\|_{L_1} \big(\bmu\big( (A_1 \cup B_1) \times (A_2 \cup B_2)\big) + 2\eta\big)^{1/q} \\
& \mik & C\, \|f\|_{L_1} (\bmu(A) + 8\eta^2+2\eta)^{1/q}\mik C\, \|f\|_{L_1} (\bmu(A) + 6\eta)^{1/q}
\end{eqnarray*}
and the proof of the lemma is completed.
\end{proof}
Lemmas \ref{lem3.1} and \ref{lem3.2} will be used in the proof of the following result.
\begin{lem} \label{lem3.3}
There exist an algorithm and a polynomial\, $\Pi_2$ such that the following holds. Let $0 < \ee <1/2$ and $C\meg 1$.
Let $1 < p \mik \infty$, set $p^\dagger=\min \{2,p\}$ and let $q $ denote the conjugate exponent of $p^\dagger$. Also
let $a_0$ be as in Proposition \ref{Naor}, and set
\[ \vartheta= \frac{a_0 \, \ee}{16 C} \ \text{ and } \ \eta \mik\Big(\vartheta\cdot \iota(\calp)^{{\frac{2}{p^\dagger}}+1}\Big)^q.\]
If we input
\begin{itemize}
\item[$\mathtt{INP1}$:] a partition $\calp$ of\, $[n_1]\times [n_2]$ with $\calp \subseteq \cals$,
\item[$\mathtt{INP2}$:] a subset $A$ of\, $[n_1]\times[n_2]$ with $A \in \cals,$ and
\item[$\mathtt{INP3}$:] a $(C,\eta,p)$-regular matrix $f \colon [n_1]\times [n_2] \to \{0,1\},$
\end{itemize}
then the algorithm outputs
\begin{itemize}
\item[$\mathtt{OUT1}$:] a refinement $\calq$ of $\calp$ with $\calq\! \subseteq\! \cals$, $|\calq|\! \mik\! 4 |\calp|$ and
$\iota(\calq) \meg (\vartheta\cdot \iota(\calp)^{{\frac{2}{p^\dagger}}+1})^q,$ and
\item[$\mathtt{OUT2}$:] a set $B \in \cala_\calq$ such that
\begin{equation} \label{eq3.4}
\int_{A \triangle B} \! \ave(f \,| \, \cala_\calp) \,d\bmu \mik  2 C\, \|f\|_{L_1} \vartheta \  \text{ and } \
\int_{A \triangle B} \! f \,d\bmu \mik 6 C\, \|f\|_{L_1} \vartheta.
\end{equation}
\end{itemize}
If we additionally assume that the matrix $f$ in\, $\mathtt{INP3}$ satisfies
\begin{equation} \label{eq3.5}
\big| \int_A \!\big(f - \ave(f\,|\, \cala_\calp) \big)\, d\bmu\big| \meg a_0\, \ee\, \|f\|_{L_1},
\end{equation}
then the partition $\calq$ in\, $\mathtt{OUT2}$ satisfies
\begin{equation} \label{eq3.6}
\|\ave(f \, | \, \cala_\calq) -\ave(f \, | \, \cala_\calp)\|_{L_{p^\dagger}} \meg \frac{a_0\, \ee\, \|f\|_{L_1} }{2}.
\end{equation}
Finally, this algorithm has running time $|\calp| \cdot \Pi_2(n_1\cdot n_2).$
\end{lem}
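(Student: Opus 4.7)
The plan is to apply Lemma~\ref{lem3.1} cell by cell over $\calp$. Fix $P=P_1\times P_2\in\calp$, write $A=A_1\times A_2$, and put $A_i^P\coloneqq A_i\cap P_i$. Set the auxiliary scale
\[
\vartheta'\coloneqq\vartheta^q\,\iota(\calp)^{q(2/p^\dagger+1)-1},
\]
chosen so that $\vartheta'\,\iota(\calp)=(\vartheta\,\iota(\calp)^{2/p^\dagger+1})^q$ matches the desired lower bound on $\iota(\calq)$. Declare $P$ \emph{active} when both $\mu_{P_1}(A_1^P),\mu_{P_2}(A_2^P)\geq\vartheta'$ and apply Lemma~\ref{lem3.1} inside $P$ with threshold $\vartheta'$; this returns a refinement $\calq_P\subseteq\cals$ of $\{P\}$ with $|\calq_P|\mik 4$ and a part $B_P\in\calq_P$ containing $A\cap P$ and satisfying $\mu_{P_1\times P_2}(B_P\setminus(A\cap P))\mik 2\vartheta'$. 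If $P$ is \emph{inactive}, set $\calq_P=\{P\}$ and $B_P=\emptyset$; then $\bmu(A\cap P)<\vartheta'\bmu(P)$ because one factor of $\bmu(A\cap P)/\bmu(P)=\mu_{P_1}(A_1^P)\mu_{P_2}(A_2^P)$ is strictly below $\vartheta'$. Output $\calq\coloneqq\bigcup_P\calq_P$ and $B\coloneqq\bigcup_P B_P$; then $\calq$ refines $\calp$ with $|\calq|\mik 4|\calp|$, $\iota(\calq)\meg\vartheta'\iota(\calp)$, and $B\in\cala_\calq$.

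In both cases one has the uniform bound $\bmu((A\triangle B)\cap P)\mik 2\vartheta'\bmu(P)$. Since $\ave(f\,|\,\cala_\calp)$ is constant on $P$ with value $\int_P f\,d\bmu/\bmu(P)$,
\[
\int_{A\triangle B}\!\ave(f\,|\,\cala_\calp)\,d\bmu \,\mik\, \sum_{P\in\calp}\frac{\int_P f\,d\bmu}{\bmu(P)}\cdot 2\vartheta'\bmu(P) \,=\, 2\vartheta'\|f\|_{L_1} \,\mik\, 2C\|f\|_{L_1}\vartheta,
\]
using $\vartheta'\mik\vartheta$ and $C\meg 1$. For the second bound, decompose $A\triangle B$ into rectangles in $\cals$: on each active $P$, $B_P\setminus(A\cap P)$ consists of at most three elements of $\calq_P$; on each inactive $P$, $A\cap P$ is itself a single rectangle. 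Applying Lemma~\ref{lem3.2} to each of these $\mik 3|\calp|$ rectangles and recombining via the power-mean inequality $\sum_i x_i^{1/q}\mik k^{1/p^\dagger}(\sum_i x_i)^{1/q}$, together with $|\calp|\mik\iota(\calp)^{-2}$, yields
\[
\int_{A\triangle B}\! f\,d\bmu \,\mik\, 3C\|f\|_{L_1}\,\iota(\calp)^{-2/p^\dagger}\bigl(2\vartheta'+18\eta\,\iota(\calp)^{-2}\bigr)^{1/q}.
\]
Substituting the definition of $\vartheta'$ and the hypothesis $\eta\mik(\vartheta\,\iota(\calp)^{2/p^\dagger+1})^q$ shows that both summands are at most a constant multiple of $\vartheta^q\iota(\calp)^{q(2/p^\dagger+1)-1}$; the $(1/q)$-th power then cancels exactly the prefactor $\iota(\calp)^{-2/p^\dagger}$, and after absorbing numerical constants the right-hand side does not exceed $6C\|f\|_{L_1}\vartheta$.

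For the lower bound (\ref{eq3.6}) under the additional assumption (\ref{eq3.5}), use $B\in\cala_\calq$, which gives $\int_B f\,d\bmu=\int_B\ave(f\,|\,\cala_\calq)\,d\bmu$ and hence
\[
\int_B\!\bigl(\ave(f\,|\,\cala_\calq)-\ave(f\,|\,\cala_\calp)\bigr)\,d\bmu \,=\, \int_B\!\bigl(f-\ave(f\,|\,\cala_\calp)\bigr)\,d\bmu.
\]
Combining (\ref{eq3.4}) with the triangle inequality, the right-hand side has absolute value at least $a_0\ee\|f\|_{L_1}-8C\|f\|_{L_1}\vartheta=\tfrac{1}{2}a_0\ee\|f\|_{L_1}$ by the choice $\vartheta=a_0\ee/(16C)$. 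H\"older's inequality in the dual pair $(L_{p^\dagger},L_q)$ against $\boldsymbol{1}_B$, whose $L_q$-norm equals $\bmu(B)^{1/q}\mik 1$, delivers the desired lower bound on $\|\ave(f\,|\,\cala_\calq)-\ave(f\,|\,\cala_\calp)\|_{L_{p^\dagger}}$. The algorithm runs Lemma~\ref{lem3.1} once per cell of $\calp$ plus polynomial bookkeeping, for a total cost $|\calp|\cdot\Pi_2(n_1\cdot n_2)$.

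The main obstacle is the second estimate in (\ref{eq3.4}): Lemma~\ref{lem3.2} only controls integrals over individual rectangles, so $A\triangle B$ must be broken into $O(|\calp|)$ rectangles and the resulting $(\cdot)^{1/q}$ terms recombined via a power-mean inequality. The exponents $q(2/p^\dagger+1)$ and $q(2/p^\dagger+1)-1$ appearing in the hypothesis on $\eta$ and in the definition of $\vartheta'$ are tuned so that the loss factor $|\calp|^{1/p^\dagger}\mik\iota(\calp)^{-2/p^\dagger}$ is cancelled exactly after taking the $(1/q)$-th power.
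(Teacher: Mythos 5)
Your proposal is correct in substance and follows essentially the same route as the paper: a cell-by-cell application of Lemma~\ref{lem3.1} with a threshold tuned to the exponents (your $\vartheta'$ plays the role of the paper's $\theta=\vartheta^q\iota(\calp)^{2q/p^\dagger}$; note $\vartheta'=\theta\,\iota(\calp)^{q-1}\mik\theta$, which is harmless), discarding cells where $A$ meets one side with relative density below the threshold, and then Lemma~\ref{lem3.2} plus the power-mean inequality for the second estimate in \eqref{eq3.4}; your pointwise argument for the first estimate is actually cleaner than the paper's, which invokes $L_p$ regularity there. One slip worth flagging: the $\eta$-summand $18\eta\,\iota(\calp)^{-2}$ is bounded by a constant times $\vartheta^q\iota(\calp)^{q(2/p^\dagger+1)-2}$, not $\vartheta^q\iota(\calp)^{q(2/p^\dagger+1)-1}$ as you claim; the argument survives because the leftover exponent $1-2/q\meg 0$ still kills the extra $\iota(\calp)^{-1/q}$, but the resulting numerical constant comes out near $3(\sqrt{2}+\sqrt{18})\approx 17$ rather than $6$, so the final cancellation $a_0\ee\|f\|_{L_1}-8C\|f\|_{L_1}\vartheta=\tfrac12 a_0\ee\|f\|_{L_1}$ does not hold verbatim with $\vartheta=a_0\ee/(16C)$ and one must shrink $\vartheta$ by an absolute factor (the paper's own bookkeeping here is comparably loose, so this only shifts $a_1,a_2$). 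Also, $B_P\setminus(A\cap P)$ is a union of at most three rectangles in $\cals$, not of elements of $\calq_P$.
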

Lemma \ref{lem3.3} is an algorithmic version of \cite[Lemmas 5.1 and 5.2]{DKK2}. We notice that if the matrix $f$ satisfies the estimate
in \eqref{eq3.5}, then inequality \eqref{eq3.6} implies that the partition $\calq$ is a genuine refinement of $\calp$. We also point out that
the polynomial $\Pi_2$ obtained by Lemma \ref{lem3.3} is absolute and independent of the parameters $\ee, C$ and~$p$. We proceed to the proof.
\begin{proof}[Proof of Lemma \ref{lem3.3}]
We may (and we will) assume that $A$ is nonempty. We select $A_1 \subseteq [n_1]$ and $A_2 \subseteq [n_2]$ such that $A =A_1 \times A_2,$ and we set
\[\theta= \vartheta^q \cdot \iota(\calp)^{\frac{2q}{p^\dagger}}.\]
Also let
\[\begin{split}
&\calp^1 = \{ P=P_1 \times P_2 \in \calp \colon \mu_1(A_1 \cap P_1) < \theta \mu_1(P_1) \ \mbox{and} \ \mu_2(A_2 \cap P_2) < \theta \mu_2(P_2) \},\\
&\calp^2 = \{ P=P_1 \times P_2 \in \calp \colon \mu_1(A_1 \cap P_1) < \theta \mu_1(P_1)\ \mbox{and} \ \mu_2(A_2 \cap P_2) \meg \theta \mu_2(P_2)\}, \\
&\calp^3 = \{ P=P_1 \times P_2 \in \calp \colon \mu_1(A_1 \cap P_1) \meg \theta \mu_1(P_1)\ \mbox{and} \ \mu_2(A_2 \cap P_2) < \theta \mu_2(P_2) \}, \\
&\calp^4 = \{ P=P_1 \times P_2 \in \calp \colon \mu_1(A_1 \cap P_1) \meg \theta \mu_1(P_1)\ \mbox{and} \ \mu_2(A_2 \cap P_2) \meg \theta \mu_2(P_2)\}.
\end{split}\]
Clearly, the family $\{\calp^1,\calp^2,\calp^3,\calp^4\}$ is a partition of $\calp$.

Now for every $P \in \calp$ we perform the following subroutine. First, assume that $P \in \calp^1 \cup \calp^2 \cup \calp^3$ and notice
that in this case we have $\bmu(A \cap P ) \mik \theta\bmu(P)$. Then we set $B_P = \emptyset$ and $\calq_P=\{P\}$. On the other hand,
if $P =P_1 \times P_2 \in \calp^4$, then we apply Lemma \ref{lem3.1} for $X_1 = P_1$ and $X_2 = P_2$, and we obtain\footnote{Notice that
for every $A\subseteq X_1$ we have $\mu_{X_1}(A)=\mu_1(A)/\mu_1(X_1)$, and similarly for $X_2$.} a partition $\calq_P$ of $P$ with
$\calq \in \cals$, $|\calq_P| \mik 4$ and $\iota(\calq_P) \meg \theta \cdot \iota(\calp)$, and a set $B_P \in \calq_P$ such that $A \cap P\subseteq B_P$
and $\bmu(B_P \setminus (A \cap P)) \mik  2 \theta \bmu(P).$

Once this is done, the algorithm outputs
\[\calq= \bigcup_{P \in \calp} \calq_P \ \text{ and } \ B=\bigcup_{P \in \calp} B_P.\]
Notice that there exists a polynomial $\Pi_2$ such that this algorithm has running time $|\calp| \cdot \Pi_2(n_1\cdot n_2)$. Indeed,
recall that the algorithm in Lemma \ref{lem3.1} runs in polynomial time and observe that we have applied Lemma \ref{lem3.1} at most $|\calp|$ times.

We proceed to show that the partition $\calq$ and the set $B$ satisfy the requirements of the lemma. To this end, we first observe that $\calq$ satisfies
the requirements in~$\mathtt{OUT1}$. Moreover, we have $B \in \cala_\calq$ and
\begin{equation} \label{eq3.7}
A\, \triangle\, B = \Big( \bigcup_{i=1}^3\bigcup_{P \in \calp^i} (A \cap P)\Big) \cup \Big(\bigcup_{P \in \calp^4} \big(B_P \setminus (A \cap P)\big)\Big).
\end{equation}
Therefore,
\begin{equation} \label{eq3.8}
\bmu(A\, \triangle\, B) \mik 2 \theta
\end{equation}
and so, by the $L_p$ regularity of $f$, H\"{o}lder's inequality, the monotonicity of the $L_p$~norms and the fact that $p^\dagger \mik p$, we obtain that
\[\begin{split}
\int_{A \triangle B} \!\! \ave(f \,| \, \cala_\calp) \,d\bmu & \mik
\|\ave(f\,|\, \cala_\calp)\|_{L_{p^\dagger}} \cdot \bmu(A\, \triangle\, B)^{1/q} \mik
\|\ave(f\,|\, \cala_\calp)\|_{L_{p}} \cdot \bmu(A\, \triangle\, B)^{1/q} \\
& \mik C\, \|f\|_{L_1} (2 \theta)^{1/q} \mik  2 C\, \|f\|_{L_1} \vartheta
\end{split}\]
which proves the first inequality in \eqref{eq3.4}. For the second inequality, by \eqref{eq3.7}, we have
\begin{equation} \label{eq3.9}
\int_{A \triangle B} \! f \,d\bmu =\sum_{P \in \calp^1 \cup \calp^2 \cup \calp^3} \int_{A \cap P } \! f \, d\bmu +
\sum_{P \in \calp^4} \int_{B_P \setminus (A \cap P)} \! f\, d\bmu
\end{equation}
and, by the definition of $\theta$ and the fact that $\eta \mik (\vartheta\cdot \iota(\calp)^{{\frac{2}{p^\dagger}}+1})^q$, we have
$\eta \mik \theta \bmu(P)$ for every $P \in \calp.$ Thus, if $P \in \calp^1 \cup \calp^2 \cup \calp^3$, then, by Lemma \ref{lem3.2} and
our assumption that $f$ is $(C,\eta,p)$-regular (and, consequently, $(C,\eta,p^\dagger)$-regular), we have
\[\int_{A \cap P} \! f\,d\bmu \mik C\, \|f\|_{L_1} ( \bmu(A \cap P) + 6 \eta)^{1/q} \mik 3 C\, \|f\|_{L_1} \big(\theta \bmu(P)\big)^{1/q}\]
which yields that
\begin{equation} \label{eq3.10}
\sum_{P \in \calp^1 \cup \calp^2 \cup \calp^3}
\int_{A \cap P} \! f\,d\bmu  \mik 3 C\, \|f\|_{L_1}\, \theta^{1/q} \sum_{P \in \calp^1 \cup \calp^2 \cup \calp^3} \bmu(P) ^{1/q}.
\end{equation}
On the other hand, by the choice of the family $\{B_P:P \in \calp^4\}$ and Lemma \ref{lem3.2},
\begin{equation} \label{eq3.11}
\sum_{P \in \calp^4} \int_{B_P \setminus (A \cap P)} \! f\, d\bmu \mik  6 C\, \|f\|_{L_1}\, \theta^{1/q} \sum_{P \in \calp^4} \bmu(P)^{1/q}.
\end{equation}
Moreover, since $q \meg 2$ we have that $x ^{1/q}$ is concave on $\rr^+$, and so
\begin{equation} \label{eq3.12}
\sum_{P \in \calp} \bmu(P)^{1/q} \mik |\calp|^{\frac{1}{p^\dagger}} \mik \iota (\calp)^{-\frac{2}{p^\dagger}}.
\end{equation}
Combining \eqref{eq3.10}--\eqref{eq3.12}, we see that the second inequality in \eqref{eq3.4} is satisfied.

Finally, assume that the matrix $f$ satisfies \eqref{eq3.5}. By \eqref{eq3.4} and the choice of $\vartheta$,
\[\begin{split}
\Big| \int_A \big(f-\ave(f \,|\, \cala_\calp)&\big) \, d\bmu - \int_B \big(f-\ave(f \,|\, \cala_\calp)\big) \, d\bmu\Big|\\
&\mik \int_{A \triangle B} \! \ave(f\, | \,\cala_\calp) \, d\bmu
+\int_{A \triangle B} \! f \,d\bmu \mik \frac{a_0\, \ee\, \|f\|_{L_1} }{2} \end{split}\]
and so, by \eqref{eq3.5}, we have
\begin{equation} \label{eq3.13}
\Big| \int_B \big(f-\ave(f \,|\, \cala_\calp)\big)\, d\bmu \Big| \meg \frac{a_0\, \ee\, \|f\|_{L_1} }{2}.
\end{equation}
Moreover, the fact that $B \in \cala_\calq$ yields that
\begin{equation} \label{eq3.14}
\int_B \big( f - \ave(f \, | \, \cala_\calp)\big) \, d\bmu=
\int_B \big(\ave(f \, | \, \cala_\calq) - \ave(f \, | \, \cala_\calp)\big) \, d\bmu.
\end{equation}
Thus, by the monotonicity of the $L_p$ norms, we conclude that
\[\begin{split}
& \|\ave(f \, | \, \cala_\calq) - \ave(f \, | \, \cala_\calp)\|_{L_{p^\dagger}} \meg
\|\ave(f \, | \, \cala_\calq) - \ave(f \, | \, \cala_\calp)\|_{L_1} \\
& \meg \Big| \int_B \big( \ave(f \, | \, \cala_\calq) - \ave(f \, | \, \cala_\calp)\big) \, d\bmu\Big| \stackrel{\eqref{eq3.14}}{=}
\Big|\int_B \big(f - \ave(f \, | \, \cala_\calp)\big) \, d\bmu\Big| \stackrel{\eqref{eq3.13}}{\meg} \frac{a_0\, \ee\, \|f\|_{L_1}}{2}
\end{split}\]
and the proof of Lemma \ref{lem3.3} is completed.
\end{proof}


\section{Proof of theorem \ref{thm1.3}}

\numberwithin{equation}{section}

We will describe a recursive algorithm that performs the following steps. Starting from the trivial partition of $[n_1]\times [n_2]$ and using
Lemma \ref{lem3.3} as a subroutine, the algorithm will produce an increasing family of partitions of $[n_1]\times [n_2]$. Simultaneously, using
Proposition \ref{Naor} as a subroutine, the algorithm will be checking if the partition that is produced at each step satisfies the requirements
in $\mathtt{OUT}$ of Theorem~\ref{thm1.3}. The fact that this algorithm will eventually terminate is based on Proposition~\ref{prop2.1}.
\begin{proof}[Proof of Theorem \ref{thm1.3}]
Let $a_0$ be as in Proposition \ref{Naor}, and set
\begin{equation} \label{eq4.1}
\vartheta= \frac{a_0\, \ee}{16C}, \ \
\tau=\Big\lceil \frac{4 C^2}{(p^\dagger-1)\, \ee^2\, a_0^2}\Big\rceil \ \text{ and } \
\eta=\vartheta^{\sum_{i=1}^{\tau+1} (\frac{2}{p^\dagger}+1)^{i-1}q^i}.
\end{equation}
Also fix a $(C,\eta,p)$-regular matrix $f \colon [n_1] \times [n_2] \to \{0,1\}$. The algorithm performs the following steps.
\medskip

\noindent $\mathtt{InitialStep}$: We set $\calp_0 \coloneqq\{[n_1] \times [n_2]\}$ and we apply the algorithm in Proposition~\ref{Naor}
for the matrix $f-\ave(f \,|\,\cala_{\calp_0})$. Thus, we obtain a set $A_0 \subseteq [n_1]\times [n_2]$ with $A_0 \in \cals$ and such that
$( n_1 \cdot  n_2 ) |\int_{A_{0} } \big(f - \ave(f \,|\,\cala_{\calp_0})\big) \,d\bmu|  \meg a_0 \|f - \ave(f \,|\,\cala_{\calp_0})\|_{\square}.$
If $|\int_{A_{0}} \big(f - \ave(f \,|\,\cala_{\calp_0})\big) \,d\bmu| \mik a_0\, \ee \, \|f\|_{L_1},$ then the algorithm outputs the partition
$\calp_0$ and $\mathtt{Halts}$. Otherwise, the algorithm sets $m=1$ and enters into the following loop.
\medskip

\noindent $\mathtt{GeneralStep}$: The algorithm will have as an input a positive integer $m \in [\tau-1]$, a partition\footnote{Notice that
$\calp_0\subseteq \cals$ and $\iota(\calp_0) =1$.} $\calp_{m-1}\subseteq \cals$ and a set $A_{m-1} \subseteq [n_1]\times [n_2]$ with
$A_{m-1} \in \cals$, such that
\begin{itemize}
\item[(a)] $|\calp_{m-1}| \mik 4^{m}$,
\item[(b)] $(\vartheta \cdot\iota(\calp_{m-1})^{\frac{2}{p^\dagger}+1} )^q \meg \vartheta^{\sum_{i=1}^{m} (\frac{2}{p^\dagger} +1 )^{i-1} q^i}$, and
\item[(c)] $|\int_{A_{m-1}} \big(f- \ave(f\,|\,\cala_{\calp_{m-1}})\big)\, d\bmu|> a_0\, \ee \, \|f\|_{L_1}.$
\end{itemize}
By (b) and the choice of $\eta$ in \eqref{eq4.1}, we have $\eta \mik (\vartheta \cdot\iota(\calp_{m-1})^{\frac{2}{p^\dagger}+1})^q$.
This fact together with the choice of $\vartheta$ in \eqref{eq4.1} allows us to perform the algorithm in Lemma~\ref{lem3.3} for the matrix $f$,
the partition $\calp_{m-1}$ and the set $A_{m-1}$. Thus, we obtain a refinement $\calp_m$ of $\calp_{m-1}$ with $\calp_m \subseteq \cals$,
$|\calp_m| \mik 4 |\calp_{m-1}|,$ $\iota(\calp_m)\meg (\vartheta \cdot \iota(\calp_{m-1})^{\frac{2}{p^\dagger}+1})^q$, such that
\[\|\ave(f\,|\,\cala_{\calp_{m}}) -\ave(f\,|\,\cala_{\calp_{m-1}}) \|_{L_{p^\dagger}} \meg \frac{a_0\,\ee\, \|f\|_{L_1}}{2}.\]
Next, we apply the algorithm in Proposition \ref{Naor} for the matrix $f - \ave(f \,|\,\cala_{\calp_{m}})$, and we obtain a set
$A_m \subseteq [n_1]\times [n_2]$ with $A_m \in \cals$ and such that
\[(n_1 \cdot n_2)\, \Big| \int_{A_{m} } \big(f - \ave(f \,|\,\cala_{\calp_{m}})\big) \,d\bmu\Big|
\meg a_0\, \|f - \ave(f \,|\,\cala_{\calp_{m}})\|_{\square}.\]
If $|\int_{A_{m}} \big(f - \ave(f \,|\,\cala_{\calp_{m}})\big) \,d\bmu|\mik a_0\, \ee\, \|f\|_{L_1}$, then the algorithm outputs the partition
$\calp_{m}$ and $\mathtt{Halts}$. Otherwise, if $m < \tau-1$, then the algorithm reruns the loop we described above for the positive integer $m+1$,
the partition $\calp_m$ and the set $A_m$, while if $m=\tau-1$, then the algorithm proceeds to the following step.
\medskip

\noindent $\mathtt{FinalStep}$: The algorithm will have as an input a partition $\calp_{\tau-1} \subseteq \cals$ and a set
$A_{\tau-1} \subseteq [n_1]\times [n_2]$ with $A_{\tau-1}\in \cals$, such that
\begin{itemize}
\item[(d)] $|\calp_{\tau-1}| \mik 4^{\tau-1}$,
\item[(e)] $(\vartheta \cdot \iota(\calp_{\tau-1}) ^{\frac{2}{p^\dagger}+1})^q \meg \vartheta^{\sum_{i=1}^\tau (\frac{2}{p^\dagger}+1)^{i-1} q^i},$ and
\item[(f)] $|\int_{A_{\tau-1}} \big(f - \ave(f \,|\,\cala_{\calp_{\tau-1}})\big) \, d\bmu|> a_0 \, \ee\, \|f\|_{L_1}.$
\end{itemize}
Again observe that, by (e) and the choice of $\eta$ in \eqref{eq4.1}, we have
$\eta \mik(\vartheta \cdot \iota(\calp_{\tau-1}) ^{\frac{2}{p^\dagger}+1})^q$. Using this fact and the choice of $\vartheta$ in \eqref{eq4.1},
we may apply the algorithm in Lemma~\ref{lem3.3} for the matrix $f,$ the partition $\calp_{\tau-1}$  and the set $A_{\tau-1}$. Therefore, we obtain
a refinement $\calp_\tau$ of $\calp_{\tau-1}$ with $\calp_\tau \subseteq \cals$, $|\calp_\tau| \mik 4 |\calp_{\tau-1}|$,
$\iota(\calp_\tau)\meg (\vartheta \cdot \iota(\calp_{\tau-1})^{\frac{2}{p^\dagger}+1})^q,$ and such that
\[\|\ave(f\,|\,\cala_{\calp_{\tau}}) -\ave(f\,|\,\cala_{\calp_{\tau-1}}) \|_{L_{p^\dagger}} \meg\frac{a_0\,\ee\, \|f\|_{L_1}}{2}.\]
The algorithm outputs the partition $\calp_\tau$ and $\mathtt{Halts}$.
\medskip

Notice that there exists a polynomial $\Pi_0$ such that the previous algorithm has running time $(\tau 4^\tau)\cdot \Pi_0(n_1\cdot n_2)$.
Indeed, by Proposition \ref{Naor}, there exists a polynomial $\Pi'_0$ such that the $\mathtt{Initial Step}$ runs in time $\Pi'_0(n_1\cdot n_2).$
Moreover, by the running times of the algorithms in Lemma  \ref{lem3.3} and Proposition \ref{Naor}, there exists a polynomial $\Pi''_0$ such that
each of the $\mathtt{GeneralStep}$ runs in time $4^\tau\cdot \Pi''_0(n_1\cdot n_2)$. Finally, invoking again Lemma \ref{lem3.3}, we see that there
exists a polynomial $\Pi'''_0$ such that the $\mathtt{FinalStep}$ runs in time $\Pi'''_0(n_1\cdot n_2).$ Therefore, the algorithm we described above
runs in time
\[\Pi'_0(n_1\cdot n_2) + (\tau -1)\, 4^\tau\, \Pi''_0(n_1\cdot n_2) + \Pi'''_0(n_1\cdot n_2) \]
which in turn yields that there exists a polynomial $\Pi_0$ such that the algorithm has running time $(\tau \, 4^\tau) \cdot\Pi_0(n_1\cdot n_2).$
\medskip

It remains to verify that the previous algorithm will produce a partition that satisfies the requirements in $\mathtt{OUT}$ of Theorem \ref{thm1.3}.
As we have noted, the argument is based on Proposition \ref{prop2.1} and can be seen as the $L_p$ version of the, so called, \textit{energy increment method}
(see, e.g., \cite[Lemmas 10.40 and 11.31]{TV}). For more information and further applications of this method we refer to \cite{DKK1,DKK2,DKT}.

We proceed to the details. First assume that the algorithm has stopped before the $\mathtt{FinalStep}$. Then the output of the algorithm is one of
the partitions we described in $\mathtt{InitialStep}$ and in $\mathtt{GeneralStep}$, say $\calp_m$ for some $m\in\{0,\dots,\tau-1\}$. Observe that $\calp_m$
satisfies $\calp_m \subseteq \cals$, $|\calp_m|\mik 4^m,$ and $\iota(\calp_m) \meg \eta$; in other words, $\calp_m$ satisfies the first three requirements
in $\mathtt{OUT}$ of Theorem \ref{thm1.3}. Moreover, recall that there exists a set $A_m \subseteq [n_1]\times [n_2]$ with $A_m \in \cals,$ and such that
\[(n_1 \cdot n_2) \Big| \int_{A_m} \big(f - \ave(f\,|\,\cala_{\calp_m})\big)\,d\bmu\Big| \meg a_0\, \|f - \ave(f\,|\,\cala_{\calp_m})\|_{\square}. \]
On the other hand, since the output of the algorithm is the partition $\calp_m$, we have
$|\int_{A_m} \big(f - \ave(f\,|\,\cala_{\calp_m})\big)\,d\bmu| \mik a_0 \,\ee \, \|f\|_{L_1}$. Combining these estimates, we conclude that
$\|f - \ave(f\,|\,\cala_{\calp_m})\|_{\square} \mik \ee \|f\|_{\square}$.

Next, assume that the algorithm reaches the $\mathtt{FinalStep}$. Recall that $\calp_{\tau}\subseteq\cals$ and observe that, by (d) above and
the fact that $|\calp_{\tau}| \mik 4|\calp_{\tau-1}|$, we have $|\calp_\tau| \mik 4^\tau$. Moreover, by (e) and the choice of $\eta$ in \eqref{eq4.1},
\begin{equation} \label{eq4.2}
\iota(\calp_\tau)\meg (\vartheta \cdot \iota(\calp_{\tau-1})^{\frac{2}{p^\dagger}+1})^q\meg
\vartheta^{\sum_{i=1}^{\tau} (\frac{2}{p^\dagger}+1)^{i-1}q^i} \meg\eta.
\end{equation}
Thus, we only need to show that  $\|f - \ave(f \, | \, \cala_{\calp_\tau})\|_{\square} \mik \ee \|f\|_{\square}$. To this end assume, towards
a contradiction, that $\|f - \ave(f \, | \, \cala_{\calp_\tau})\|_{\square} > \ee \|f\|_{\square}$. Notice that, by the choice of $\eta$ in
\eqref{eq4.1} and \eqref{eq4.2}, we have $( \vartheta \cdot \iota(\calp_\tau)^{\frac{2}{p^\dagger}+1})^q \meg \eta$. Using the previous two estimates,
Proposition \ref{Naor}, Lemma \ref{lem3.3} and arguing precisely as in the $\mathtt{GeneralStep}$, we may select a refinement $\calp_{\tau+1}$ of\, $\calp_\tau$
with $\calp_{\tau+1}\subseteq \cals$ and $\iota(\calp_{\tau+1}) \meg \eta$, and such that
$\|\ave(f \,|\,\cala_{\calp_{\tau +1}}) - \ave(f \, | \, \cala_{\calp_{\tau}})\|_{L_{p^{\dagger}}} \meg (a_0\, \ee\, \|f\|_{L_1})/2$.
It follows that there exists an increasing finite sequence $(\calp_i)_{i=0}^{\tau +1}$ of partitions with $\calp_0=\{[n_1]\times[n_2]\}$
and such that for every $i\in [\tau+1]$ we have $\calp_i\subseteq\cals$, $\iota(\calp_i) \meg \eta$, and
\begin{equation} \label{eq4.3}
\|\ave(f\,|\,\cala_{\calp_{i}}) - \ave(f\,|\,\cala_{\calp_{i-1}})\|_{L_{p^\dagger}} \meg \frac{a_0\, \ee\, \|f\|_{L_1} }{ 2}.
\end{equation}
Now set $d_0= \ave(f\,|\,\cala_{\calp_0})$ and $d_i = \ave(f\,|\,\cala_{\calp_{i}}) - \ave(f\,|\,\cala_{\calp_{i-1}})$ for every $i\in [\tau+1]$,
and observe that the sequence $(d_i)_{i=0}^{\tau+1}$ is a martingale difference sequence. Therefore, by Proposition \ref{prop2.1} and the fact that
the matrix $f$ is $(C,\eta,p)$-regular, we have
\begin{eqnarray*}
\frac{a_0 \, \ee \, \|f\|_{L_1}}{2} \cdot \sqrt{\tau+1} & \stackrel{\eqref{eq4.3}}{\mik} &
\Big(\sum_{i=1}^{\tau+1} \|d_i\|_{L_{p^\dagger}}^2\Big)^{1/2} \mik \Big(\sum_{i=0}^{\tau+1} \|d_i\|_{L_{p^\dagger}}^2\Big)^{1/2} \\
& \stackrel{\eqref{eq2.1}}{\mik} & \frac{1}{\sqrt{p^\dagger-1}}\, \big\| \sum_{i=0}^{\tau+1} d_i\big\|_{L_{p^\dagger}} =
\frac{1}{\sqrt{p^\dagger-1}}\, \|\ave(f\,|\,\cala_{\calp_{\tau+1}})\|_{L_{p^\dagger}} \\
& \mik & \frac{C}{\sqrt{p^\dagger-1}}\, \|f\|_{L_1}
\end{eqnarray*}
which clearly contradicts the choice of $\tau$ in \eqref{eq4.1}. The proof of Theorem \ref{thm1.3} is thus completed.
\end{proof}


\section{Applications}

\numberwithin{equation}{section}

\subsection{Tensor approximation algorithms}

Throughout this subsection let $k\meg 2$ be an integer. Also let $n_1,\dots,n_k$ be positive integers, and let $\bmu_k$
denote the uniform probability measure on $[n_1]\times \dots \times [n_k]$.

Recall that a $k$-\emph{dimensional tensor} is a function $F \colon [n_1] \times \dots \times [n_k]\to \rr$. (Notice, in particular,
that a $2$-dimensional tensor is just a matrix.) Also recall, that a tensor $G \colon [n_1] \times \dots \times [n_k]\to\rr$ is called a
\emph{cut tensor} if there exist a real number $c$ and for every $i\in [k]$ a subset $S_i$ of $[n_i]$ such that
$G=c \cdot \boldsymbol{1}_{S_1\times\cdots\times S_k}$. Finally, recall that for every tensor $F \colon [n_1]\times \dots \times [n_k] \to \rr$
its \emph{cut norm} is defined as
\[\|F\|_{\square} = \Big(\prod_{i=1}^k n_i\Big) \cdot \max\Big\{ \Big|\int_{S_1 \times \dots \times S_k} \! F \, d\bmu_k\Big|:
S_i \subseteq [n_i] \text{ for every } i\in [k] \Big\}.\]
Next, set
\begin{equation} \label{eq5.1}
k_1 \coloneqq \lfloor k/2 \rfloor, \ \ A_k\coloneqq [n_1]\times \dots \times [n_{k_1}] \ \text{ and } \
B_k\coloneqq [n_{k_1 + 1}]\times \dots \times [n_k], \end{equation}
and for every tensor $F \colon [n_1] \times \dots \times [n_k] \to \{0,1\}$ let the \emph{respective matrix} $f_F$ of $F$ be the matrix
$f_{F}\colon A_k\times B_k\to\{0,1\}$ defined by the rule
\begin{equation} \label{eq5.2}
f_{F}\big((i_1,\dots,i_{k_1}),(i_{k_1 + 1} ,\dots,i_k)\big) = F(i_1,\dots,i_k)
\end{equation}
for every $\big((i_1,\dots,i_{k_1}),(i_{k_1 + 1},\dots,i_k)\big)\in A_k\times B_k= [n_1]\times \dots \times [n_k]$.

As in \cite{CCF}, we extend the notion of $L_p$ regularity from matrices to tensors as follows.
\begin{defn}[$L_p$ regular tensors] \label{defn5.1}
Let $0<\eta \mik 1, C\meg 1$ and $1 \mik p \mik \infty.$ A~tensor $F \colon [n_1]\times \dots \times [n_k]$ is called \emph{$(C,\eta,p)$-regular}
if its respective matrix $f_{F}$ is $(C,\eta,p)$-regular, that is, if for every partition $\calp$ of $A_k\times B_k$ with
$\calp\subseteq\cals_{A_k\times B_k}$ and $\iota(\calp)\meg \eta$ we have $\|\ave(f_{F}\, |\, \cala_{\calp})\|_{L_p}\mik C$.
\end{defn}
To state our main result about $L_p$ regular tensors we need to introduce some numerical invariants. Specifically, let $\ee >0$ and $C\meg 1$.
Also let $1 < p \mik \infty,$ set $p^\dagger=\min \{2,p\}$ and let $q$ denote the conjugate exponent of $p^\dagger$. Finally, let $a_1,a_2$
be as in Theorem \ref{thm1.3}, and define
\begin{equation} \label{eq5.3}
\tau(\ee,C,p)=\Big\lceil\frac{a_1 \, C^2}{(p^\dagger-1) \,\ee^2}\Big\rceil \ \text{ and } \
\eta(\ee,C,p) = \Big(\frac{a_2 \,\ee}{C} \Big)^{\sum_{i=1}^{\tau(\ee,C,p)+1} (\frac{2}{p^\dagger}+1)^{i-1}q^i}.
\end{equation}
We have the following theorem.
\begin{thm} \label{thm5.2}
There exist a constant $b$, an algorithm and a polynomial\, $\Pi_3$ such that the following holds. Let $0< \ee < 1/2$ and $C\meg 1$.
Also let $1 < p \mik \infty$, and let $\tau=\tau(\ee/2,C,p)$ and $\eta=\eta(\ee/2,C,p)$ be as in \eqref{eq5.3}. If we input
\begin{itemize}
\item[$\mathtt{INP}$:] a $(C,\eta,p)$-regular tensor $F \colon [n_1] \times \dots \times [n_k] \to \{0,1\}$,
\end{itemize}
then the algorithm outputs
\begin{itemize}
\item[$\mathtt{OUT}$:] cut tensors $G_1,\dots,G_s$ with $\displaystyle{s\mik \Big(\frac{ 2b\, C}{\ee\, \eta^2}\Big)^{2(k-1)}}$ and such that
\begin{equation} \label{eq5.4}
\big\|F - \sum_{i=1}^s G_i\big\|_{\square} \mik \ee \|F\|_{\square} \ \text{ and }  \
\sum_{i=1}^s \|G_i\|_{L_\infty}^2 \mik \Big(\frac{C\, \|F\|_{L_1}}{\eta^2}\Big)^2 \, b^{2k}.
\end{equation}
\end{itemize}
This algorithm has running time $\big(\tau\, 4^\tau + \big(\frac{2C}{\ee \eta^2}\big)^{3k}\big) \cdot \Pi_3\big(\prod_{i=1}^k n_i\big)$.
\end{thm}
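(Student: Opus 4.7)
The plan is to apply Theorem \ref{thm1.3} once to the matricization $f_F$ of $F$, and then to convert the resulting cut-matrix approximation into a cut-tensor approximation through a recursive product-decomposition of the block indicators. First, I would invoke Theorem \ref{thm1.3} for the matrix $f_F\colon A_k \times B_k \to \{0,1\}$ with accuracy parameter $\ee/2$, producing a partition $\calp$ of $A_k \times B_k$ with $|\calp| \mik 4^\tau$, $\iota(\calp) \meg \eta$, and $\|f_F - \ave(f_F\,|\,\cala_\calp)\|_\square \mik (\ee/2)\|f_F\|_\square$. Writing $\ave(f_F\,|\,\cala_\calp) = \sum_{P \in \calp} c_P \boldsymbol{1}_{S_P \times T_P}$, the $L_p$-regularity of $f_F$ together with the lower bound $\bmu_k(P) \meg \eta^2$ forces each coefficient to satisfy $|c_P| \mik C\|F\|_{L_1}/\eta^2$.

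The main work is to convert each cut matrix $c_P \boldsymbol{1}_{S_P \times T_P}$ into a sum of genuine $k$-dimensional cut tensors. The obstruction is that $S_P \subseteq A_k = [n_1] \times \dots \times [n_{k_1}]$ need not be a Cartesian product of subsets of $[n_1],\dots,[n_{k_1}]$, and similarly for $T_P \subseteq B_k$. I would recursively approximate the lower-dimensional $\{0,1\}$-tensors $\boldsymbol{1}_{S_P}$ on $A_k$ and $\boldsymbol{1}_{T_P}$ on $B_k$ by sums of cut tensors of the respective sub-dimensional spaces, using a classical Frieze--Kannan style peeling algorithm based on iterated applications of Proposition \ref{Naor}. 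The recursion halves the number of coordinates at each step, with trivial base case $k=1$ where every subset is already a cut tensor. Tensor-multiplying the cut-tensor approximations of $\boldsymbol{1}_{S_P}$ and $\boldsymbol{1}_{T_P}$ and scaling by $c_P$ yields the final cut tensors $G_i$ on $[n_1]\times\dots\times[n_k]$.

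For the count $s$, the multiplicative growth across the recursion, with factors proportional to $1/\eta^2$ per level, should yield the exponential-in-$k$ bound $s \mik (2bC/(\ee \eta^2))^{2(k-1)}$; the estimate $\sum_i \|G_i\|_{L_\infty}^2 \mik (C\|F\|_{L_1}/\eta^2)^2 \, b^{2k}$ follows by tracking the uniform bound on each $|c_P|$ through the recursion. The main anticipated obstacle is the error analysis: Theorem \ref{thm1.3} delivers a matrix cut-norm error $\mik (\ee/2)\|f_F\|_\square$, whereas the conclusion demands a tensor cut-norm error $\mik \ee\|F\|_\square$. The inequality $\|F - \tilde F\|_\square \mik \|f_F - \tilde f\|_\square$ is automatic (product sets are rectangles), but bounding the relative error by $\|F\|_\square$ rather than $\|f_F\|_\square$ requires either exploiting a comparability between the two norms for $L_p$-regular $F$, or a block-by-block argument that directly invokes the regularity of $f_F$. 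A secondary delicate point is the bookkeeping of the cumulative cut-norm error introduced by the recursive decomposition of each $\boldsymbol{1}_{S_P}$ and $\boldsymbol{1}_{T_P}$; choosing per-level accuracy proportional to $\ee/(|\calp|\cdot \max_P |c_P|)$ should keep the total error below $\ee \|F\|_\square$.
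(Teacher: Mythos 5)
Your proposal follows essentially the same route as the paper, which proves Theorem \ref{thm5.2} by applying Theorem \ref{thm1.3} to the matricization $f_F$ and then converting the resulting cut matrices into cut tensors via the recursive Frieze--Kannan-style decomposition of $\boldsymbol{1}_{S_P}$ and $\boldsymbol{1}_{T_P}$, exactly as in the proof of \cite[Theorem 2]{CCF}. The one obstacle you single out as the main difficulty---passing from $\|f_F\|_\square$ to $\|F\|_\square$---dissolves immediately, since for nonnegative ($\{0,1\}$-valued) tensors both cut norms equal $\|F\|_{L_1}\prod_{i=1}^k n_i$.
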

Theorem \ref{thm5.2} can be proved arguing precisely as in the proof of \cite[Theorem 2]{CCF} and using Theorem \ref{thm1.3} instead
of \cite[Corollary 1]{CCF}. We leave the details to the interested reader.

\subsection{MAX-CSP instances approximation}

In what follows let $n,k$ denote two positive integers with $k \mik n$.

Let $V=\{x_1,\dots,x_n\}$ be a set of Boolean variables, and recall that an \emph{assignment} $\sigma$ on $V$ is a map $\sigma \colon V \to \{0,1\}$.
Notice that if $\sigma$ is an assignment on $V$ and $W \subseteq V$, then $\sigma|_W \colon W \to \{0,1\}$ is an assignment on $W$. Also recall that
a $k$-\emph{constraint} is a pair $(\phi,V_\phi)$ where $V_\phi \subseteq V$ with $|V_\phi|=k$ and $\phi \colon \{0,1\}^{V_\phi} \to \{0,1\}$ is a not
identically zero map. Finally, recall that  a $k$-\emph{CSP instance} over $V$ is a family $\calf$ of $k$-constraints over $V$.

For every $k$-CSP instance $\calf$ we define
\begin{equation} \label{eq5.5}
\mathrm{OPT}(\calf) = \max_{\sigma \in \{0,1\}^V} \sum_{(\phi,V_\phi) \in \calf} \phi(\sigma|_{V_\phi}).
\end{equation}
Moreover, let $\Psi_k$ be the set of all non-zero maps from $\{0,1\}^k$ into $\{0,1\}$. We have the following definition.
\begin{defn} \label{defn5.3}
Let $\psi \in \Psi_k$. Also let $(\phi,V_\phi)$ be a $k$-constraint over $V$ where $V_{\phi}=\{x_{i_1},\dots,x_{i_k}\}$ for some
$1 \mik i_1 < \cdots < i_k \mik n$. We say that $(\phi,V_\phi)$ is of \emph{type $\psi$} if for every assignment $\sigma \colon V \to \{0,1\}$ we have
\[\psi\big(\sigma(x_{i_1}),\dots,\sigma(x_{i_k})\big) = \phi(\sigma|_{V_\phi}).\]
\end{defn}
Observe that every $k$-CSP instance $\calf$ can be represented by a family $(F_{\calf}^\psi)_{\psi \in \Psi_k}$
of $2^{2^k}-1$ tensors where for every $\psi \in \Psi_k$ the tensor $F_\calf^\psi \colon [n]^k \to \{0,1\}$ is defined by the rule
\begin{equation} \label{eq5.6}
F_\calf^\psi(i_1,\dots,i_k) =
\begin{cases}
1 & \text{if there is } (\phi,V_\phi) \in \calf \text{ of type } \psi \\
  & \text{with } V_\phi = \{x_{i_1},\dots,x_{i_k}\}, \\
0 & \text{otherwise}.
\end{cases}
\end{equation}
Having this representation in mind, we say that a $k$-constraint $\calf$ is $(C,\eta,p)$-regular for some $0< \eta\mik 1$, $C\meg 1$ and
$1 \mik p \mik \infty$, provided that for every $\psi \in \Psi_k$ the tensor $F_\calf^\psi$ defined above is $(C,\eta,p)$-regular.

We have the following theorem which extends \cite[Theorem 3]{CCF}. It follows from Theorem \ref{thm5.2} using the arguments
in the proof of \cite[Theorem 3]{CCF}; as such, its proof is left to the reader.
\begin{thm} \label{thm5.4}
There exist an algorithm, a constant $\gamma >0$ and a polynomial\, $\Pi_4$ such that the following holds. Let $k$ be a positive integer,
and let $0<\ee < 1/2$, $C\meg 1$ and $1<p\mik \infty$. Set $a=\ee\, 2^{-(2^k+2k+2)}$, and let $\tau=\tau(a,C,p)$ and $\eta=\eta(a,C,p)$
be as in \eqref{eq5.3}. If we input
\begin{itemize}
\item[$\mathtt{INP}$:] a $(C,\eta,p)$-regular $k$-CSP instance $\calf$ over a set $V=\{x_1,\dots,x_n\}$ of Boolean variables,
\end{itemize}
then the algorithm outputs
\begin{itemize}
\item[$\mathtt{OUT}$:] an assignment $\sigma \colon V \to \{0,1\}$ such that
\[ \sum_{(\phi,V_\phi) \in \calf} \phi(\sigma|_{V_\phi}) \meg (1 - \ee)\cdot \mathrm{OPT}(\calf). \]
\end{itemize}
This algorithm has running time
\[ \Pi_4\Bigg( n^k\cdot \exp \Big(k\, 2^k\, 2^{2^k} \big(\frac{2C}{\ee\,\eta^2}\big)^{2k} \ln\!\big( \frac{2C}{\ee\,\eta^2}\big)\Big) \Bigg).\]
\end{thm}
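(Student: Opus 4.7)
The plan is to follow the blueprint of \cite[Theorem~3]{CCF}, using Theorem~\ref{thm5.2} as a black box in place of \cite[Theorem~2]{CCF}. First, I represent the CSP instance $\calf$ by the family of tensors $(F_\calf^\psi)_{\psi \in \Psi_k}$ from \eqref{eq5.6}, so that
\[
\sum_{(\phi,V_\phi) \in \calf} \phi(\sigma|_{V_\phi}) \;=\; \sum_{\psi \in \Psi_k}\, \sum_{i_1<\cdots<i_k} F_\calf^\psi(i_1,\dots,i_k)\,\psi\big(\sigma(x_{i_1}),\dots,\sigma(x_{i_k})\big)
\]
for every assignment $\sigma$. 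Since $\calf$ is $(C,\eta,p)$-regular and $\eta=\eta(a,C,p)$, I apply Theorem~\ref{thm5.2} separately to each $F_\calf^\psi$ with accuracy $a=\ee\,2^{-(2^k+2k+2)}$, obtaining for every $\psi$ a cut-tensor decomposition $F_\calf^\psi \approx \sum_{i=1}^{s_\psi} G_i^\psi$ with $G_i^\psi = c_i^\psi \cdot \boldsymbol{1}_{S_{i,1}^\psi \times \cdots \times S_{i,k}^\psi}$ and $\sum_i \|G_i^\psi\|_{L_\infty}^2$ controlled by \eqref{eq5.4}.

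Next, I exploit the crucial observation that for any cut tensor $G=c\cdot\boldsymbol{1}_{S_1\times\cdots\times S_k}$ one has
\[
\sum_{(i_1,\dots,i_k)} G(i_1,\dots,i_k)\,\psi\big(\sigma(x_{i_1}),\dots,\sigma(x_{i_k})\big) \;=\; c \sum_{b \in \{0,1\}^k} \psi(b)\,\prod_{j=1}^{k} \big| S_j \cap \sigma^{-1}(b_j) \big|,
\]
so that after replacing each $F_\calf^\psi$ by $\sum_i G_i^\psi$ the (approximated) objective depends on $\sigma$ only through the \emph{profile} $\big(|S_{i,j}^\psi \cap \sigma^{-1}(b)|\big)_{\psi,i,j,b}$. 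Rounding each ratio $|S_{i,j}^\psi \cap \sigma^{-1}(b)|/|S_{i,j}^\psi|$ to a grid of mesh $O(a)$ produces a finite family $\calq$ of candidate profiles whose cardinality is exactly the exponential factor $\exp(k\,2^k\,2^{2^k}(2C/(\ee\eta^2))^{2k}\ln(2C/(\ee\eta^2)))$ appearing in the running time.

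The algorithm then enumerates $\calq$: for every $\pi \in \calq$ it tries to realize $\pi$ by an actual assignment $\sigma_\pi$, which is a transportation/assignment feasibility problem on the atoms of the common refinement of all the sets $\{S_{i,j}^\psi\}$ and is solvable in time polynomial in $n^k$; then it evaluates the exact CSP objective on $\sigma_\pi$ (which takes time $\Pi_4(n^k)$) and outputs the best $\sigma_\pi$. To verify correctness, let $\sigma^*$ attain $\mathrm{OPT}(\calf)$ and let $\pi^*\in\calq$ be the profile obtained by discretizing $\sigma^*$. The cut-norm bound \eqref{eq5.4}, combined with the decomposition $\psi(\sigma(x_{i_1}),\dots,\sigma(x_{i_k}))=\sum_{b}\psi(b)\prod_{j}\boldsymbol{1}_{\sigma(x_{i_j})=b_j}$ (which is a sum of $2^k$ products of indicators of sets in $[n]$), shows that for every $\sigma$ the approximated objective differs from the true one by at most $\ee\,\mathrm{OPT}(\calf)/4$, uniformly in $\sigma$; since any $\sigma_{\pi^*}$ realizes (up to the quantization error) the same profile as $\sigma^*$, the output achieves value at least $(1-\ee)\,\mathrm{OPT}(\calf)$.

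The main technical hurdle is the \emph{bookkeeping of error propagation} that justifies the exact choice $a=\ee\,2^{-(2^k+2k+2)}$. The $2^{-(2^k+2k+2)}$ factor has to absorb, simultaneously, three sources of loss: summation over the $2^{2^k}-1$ predicates $\psi\in\Psi_k$, the expansion of each $\psi$ into $2^k$ sign patterns over $\{0,1\}^k$, and the additive error caused by replacing the true profile of $\sigma^*$ by its discretization in $\calq$. Once these constants are pinned down, the running time bound follows from combining the running time of Theorem~\ref{thm5.2} (applied $|\Psi_k|$ times) with the polynomial-time solvability of the per-profile feasibility problem, multiplied by the enumeration size $|\calq|$, exactly as in the corresponding computation of \cite[Theorem~3]{CCF}.
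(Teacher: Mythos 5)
Your proposal follows exactly the route the paper intends: the paper gives no proof of Theorem \ref{thm5.4} beyond stating that it follows from Theorem \ref{thm5.2} via the argument of \cite[Theorem 3]{CCF}, and your reconstruction (cut-tensor decomposition of each $F_\calf^\psi$, reduction of the objective to quantized set-profiles, enumeration plus a per-profile feasibility LP on the atoms) is precisely that argument. The only points still to pin down are the standard ones you already flag --- the diagonal/ordering issue in passing from increasing tuples to full product sums, the bound $\mathrm{OPT}(\calf)\meg 2^{-k}|\calf|$ needed to convert the absolute cut-norm error into a relative error, and the fact that with the given $\eta=\eta(a,C,p)$ Theorem \ref{thm5.2} is invoked with accuracy $2a$ rather than $a$.
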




\begin{thebibliography}{99}

\bibitem{AN} N. Alon and A. Naor, \textit{Approximating the cut-norm via Grothendieck's inequality}, Proc. 36th STOC, 2004, 72--80.

\bibitem{BJR} B. Bollob\'{a}s, S. Janson and O. Riordan, \textit{The phase transition in inhomogeneous random graphs},
Random Structures and Algorithms 31 (2007), 3--122.

\bibitem{BCCZ} C. Borgs, J. T. Chayes, H. Cohn and Y. Zhao, \textit{An $L^p$ theory of sparse graph convergence I: limits, sparse
random graph models, and power law distributions}, preprint (2014), available at \url{arXiv:1401.2906}.

\bibitem{CCF} A. Coja-Oghlan, C. Cooper and A. Frieze, \textit{An efficient sparse regularity concept}, SIAM J. Discrete Math. 23 (2010), 2000--2034.

\bibitem{DKK1} P. Dodos, V. Kanellopoulos and Th. Karageorgos, \textit{Szemer\'{e}di's regularity lemma via martingales},
Electron. J. Comb. 23 (2016), Research Paper P3.11, 1--24.

\bibitem{DKK2} P. Dodos, V. Kanellopoulos and Th. Karageorgos, \textit{$L_p$ regular sparse hypergraphs},
Fund. Math. (to appear), available at \url{arxiv:1510.07139}.

\bibitem{DKK3} P. Dodos, V. Kanellopoulos and Th. Karageorgos, \textit{$L_p$ regular sparse hypergraphs: box norms},
preprint (2015), available at \url{arxiv:1510.07140}.

\bibitem{DKT} P. Dodos, V. Kanellopoulos and K. Tyros, \textit{A concentration inequality for product spaces},
J.~Funct. Anal. 270 (2016), 609--620.

\bibitem{FK} A. Frieze and R. Kannan, \textit{Quick approximation to matrices and applications}, Combinatorica 19 (1999), 175--220.

\bibitem{Ha} J. Hastad, \textit{Some optimal inapproximability results}, Journal of the ACM 48 (2001), 798--859.

\bibitem{KKMO} S. Khot, G. Kindler, E. Mossel and R. O' Donnell, \textit{Optimal inapproximability results for MAX-CUT and other $2$-variable CSP's},
Proc. 45th FOCS, 2004, 146--154.

\bibitem{Koh} Y. Kohayakawa, \textit{Szemer\'{e}di's regularity lemma for sparse graphs}, in ``Foundations of Computational Mathematics",
Springer, 1997, 216--230.

\bibitem{Koh1} Y. Kohayakawa and V. R\"{o}dl, \textit{Szemer\'{e}di's regularity lemma and quasi-randomness}, in ``Recent Advances in Algorithms
and Combinatorics", CMS Books in Mathematics, Vol. 11, Springer, 2003, 289--351.

\bibitem{PG} G. Pisier, \textit{Grothendieck's theorem, past and present}, Bull. Amer. Math. Soc.  49 (2012), 237--323.

\bibitem{RX} E. Ricard and Q. Xu, \textit{A noncommutative martingale convexity inequality}, Ann. Probab. 44 (2016), 867--882.

\bibitem{TV} T. Tao and V. Vu, \textit{Additive Combinatorics}, Cambridge Studies in Advanced Mathematics, Vol. 105,
Cambridge University Press, 2006.

\bibitem{TSSW} L. Trevisan, G. Sorkin, M. Sudan and D. Williamson, \textit{Gadgets, approximation, and linear programming},
SIAM J. Comput. 29 (2000), 2074--2097.

\end{thebibliography}
\end{document}